\numberwithin{equation}{section}
\newtheorem{proposition}{Proposition}[section]
\newtheorem{theorem}[proposition]{Theorem}
\newtheorem{lemma}[proposition]{Lemma}
\newtheorem{corollary}[proposition]{Corollary}
\newtheorem{remark}[proposition]{Remark}
\newtheorem{example}[proposition]{Example}
\newtheorem{algorithm}[proposition]{Algorithm}
\newenvironment{proof}{{\noindent \em Proof.}}{\hfill $\fbox{}$ \vspace*{5mm}}
\newcommand{\ba}{{\bf a}}
\newcommand{\bfe}{{\bf e}}
\newcommand{\bu}{{\bf u}}
\newcommand{\bv}{{\bf v}}
\newcommand{\diag}{{\rm diag}}
\newcommand{\off}{{\rm off}}
\newcommand{\rank}{{\rm rank}}
\newcommand{\tol}{{\tt tol}}
\newcommand{\R}{{\mathbb R}}
\newcommand{\Rmn}{{\mathbb R}^{m\times n}}
\newcommand{\BE}{\begin{equation}}
\newcommand{\EE}{\end{equation}}
\definecolor{DarkGreen}{rgb}{0,0.6,0}
\definecolor{red}{rgb}{1,0,0}           
\definecolor{green}{rgb}{0,1,0}
\definecolor{blue}{rgb}{0,0,1}
\definecolor{light}{gray}{.98}          
\definecolor{dark}{gray}{.20}
\definecolor{pink}{rgb}{.95,0.82,0.92}  
\definecolor{backgd}{rgb}{0.65,0.73,0.7}  
\begin{document}

\title{\bf A Structure-Preserving One-Sided  Jacobi Method for Computing the SVD  of a Quaternion Matrix}
\author{Ru-Ru Ma\thanks{School of Mathematical Sciences, Xiamen University, Xiamen 361005, People's Republic of China  (maruru7271@126.com). The research of this author is partially supported by the Fundamental Research Funds for the Central Universities (No. 20720180008).}
\and Zheng-Jian Bai\thanks{Corresponding author. School of Mathematical Sciences and Fujian Provincial Key Laboratory on Mathematical Modeling \& High Performance Scientific Computing,  Xiamen University, Xiamen 361005, People's Republic of China (zjbai@xmu.edu.cn). The research of this author is partially supported by the National Natural Science Foundation of China (No. 11671337), the Natural Science Foundation of Fujian Province of China (No. 2016J01035), and the Fundamental Research Funds for the Central Universities (No. 20720180008).}}
\date{}
\maketitle

\begin{abstract}
In this paper, we  provide a structure-preserving one-sided cyclic Jacobi method for computing the singular value decomposition of a  quaternion matrix.   In this method,   the columns of the quaternion matrix  are orthogonalized in pairs by using a sequence of orthogonal JRS-symplectic Jacobi matrices to its real counterpart.  The quadratic convergence  is also established under some mild conditions.  Numerical tests are reported to illustrate the efficiency of the proposed method.
	
\end{abstract}

\vspace{3mm}
{\bf Keywords.} Quaternion matrix; singular value decomposition;  one-sided cyclic Jacobi method; Color image compression

\vspace{3mm}
{\bf AMS subject classifications.}  65F18,  65F15, 15A18, 65K05, 90C26, 90C48

\section{Introduction}\label{sec1}
The concept of quaternions was originally introduced by  Hamilton \cite{hamilton66}.
Quaternions and quaternion matrices arise in various applications in applied science such as quaternionic quantum mechanics \cite{Ar11, DM92, fj59}, color image processing \cite{ XYXN15} and field theory \cite{lr92}, etc. The quaternion matrix singular value decomposition (QSVD) was studied theoretically in 1997 by Zhang \cite{zhangf97}. Recently, the QSVD has been an important tool  in many applications such as color image processing \cite{GY15, bs03, lm18, pcd03}, signal processing \cite{bm04,YL17}, and electroencephalography \cite{EK16}, etc.

Various numerical methods have been proposed for computing the QSVD. In \cite{l01, bm04, pcd03}, some algorithms were provided to calculate the singular value decomposition (SVD) of a  quaternion matrix via utilizing its equivalent complex matrix. In \cite{sabi06}, Sangwine and Le Bihan proposed a method for computing the QSVD based on bidiagonalization via quaternionic Householder transformations. In \cite{bisa07}, Le Bihan and Sangwine gave an implicit Jacobi algorithm for computing the QSVD where the quaternion arithmetic is employed instead of a complex equivalent representation. In \cite{DO11}, Doukhnitch and Ozen presented a coordinate rotation digital computer algorithm  for computing the QSVD.

In this paper, we  propose a  structure-preserving one-sided cyclic Jacobi method for computing the QSVD. This is motivated by the recent  structure-preserving methods related to quaternion matrices. In particular, Jia et al. designed a real structure-preserving method for quaternion Hermitian eigenvalue problems by using the structure-preserving tridiagonalization of the real counterpart for  quaternion Hermitian matrices \cite{jwl13}. Li et al. presented a structure-preserving algorithm  for computing the QSVD, which uses  the structure-preserving bidiagonalization of the real counterpart of  quaternion matrices via  Householder-based transformations  \cite{lw14, lwzz16}. Ma et al.  proposed a structure-preserving Jacobi algorithm for quaternion Hermitian eigenvalue problems \cite{mjb18}. In this paper, the columns  of a rectangle quaternion matrix  are orthogonalized in pairs by using a sequence of orthogonal JRS-symplectic Jacobi matrices to its real counterpart.  When the updated  quaternion matrix has sufficiently orthogonal columns, the SVD is obtained by column scaling.  Numerical experiments show that our method is more efficient  than the implicit Jacobi algorithm in \cite{bisa07} and gives almost the same singular values as the solver in quaternion toolbox for {\tt MATLAB} \cite{sbqtfm}.

The rest of this paper is organized as follows. In section \ref{sec2} we give necessary preliminaries used in this paper. In section \ref{sec3} a structure-preserving one-sided cyclic Jacobi algorithm is provided for computing the QSVD and  the quadratic convergence is established under some assumptions. In section \ref{sec4} we report some numerical experiments to indicate the efficiency of our algorithm and apply it to color image compression.  Finally, some concluding remarks are given in section \ref{sec5}.

\section{Preliminaries}\label{sec2}
In this section, we briefly review some necessary definitions and properties of quaternions and quaternion matrices. Quaternions were originally introduced by Hamilton in 1843 \cite{hamilton44}. For more information on quaternions, one may refer to \cite{ks89,zhangf97}  and references therein.

Throughout this paper, we need the following notation.  Let $\R$ and $\Rmn$ be the set of all real numbers and the set of all $m\times n$ real matrices respectively. Let $\mathbb{H}$ and $\mathbb{H}^{m\times n}$ be the set of all quaternions and the set of all $m\times n$ quaternion matrices respectively.  Let $I_n$ be the identity matrix of order $n$. Let $A^T, \bar{A}$ and $A^*$ stand for the transpose, conjugate and conjugate transpose of a matrix $A$ accordingly. $\|\cdot\|_F$ means the Frobenius matrix norm. For any $n\times n$ square matrix $A=(a_{pq})$, we define
\[
\off(A):=\sqrt{\sum\limits_{p=1}^n\sum\limits_{\substack{q=1 \\ q\neq p }}^n |a_{pq}|^2},
\]
where $|a_{pq}|$ denotes the absolute value of $a_{pq}$.

A quaternion $a\in\mathbb{H}$  takes the form of
\[
a=a_0+a_1i+a_2j+a_3k,
\]
where $a_0, a_1, a_2, a_3\in \mathbb{R}$ and the quaternion units $i,j,k$
satisfy the following rules
\[
i^2 = j^2 = k^2 = -1,\quad
jk=-kj=i,\quad ki=-ik=j,\quad ij=-ji=k.
\]
The conjugate of $a\in\mathbb{H}$ is given by $\bar{a}=a^*=a_0-a_1i-a_2j-a_3k$. For two quaternions
$a=a_0+a_1i+a_2j+a_3k\in\mathbb{H}$ and $b=b_0+b_1i+b_2j+b_3k\in\mathbb{H}$, their product (i.e., the Hamilton product) is given by
\begin{eqnarray*}
ab &=& a_0b_0-a_1b_1-a_2b_2-a_3b_3 \\
&& +(a_0b_1+a_1b_0+a_2b_3-a_3b_2)i \\
&& +(a_0b_2-a_1b_3+a_2b_0+a_3b_1)j \\
&& +(a_0b_3+a_1b_2-a_2b_1+a_3b_0)k.
\end{eqnarray*}
The absolute value $|a|$ of $a=a_0+a_1i+a_2j+a_3k\in\mathbb{H}$ is defined by
\[
|a| = \sqrt{a_0^2+a_1^2+a_2^2+a_3^2}=\sqrt{a\bar{a}}=\sqrt{\bar{a}a}.
\]
The multiplicative inverse of any nonzero quaternion $0\neq a\in\mathbb{H}$ is given by $a^{-1}=\bar{a}/|a|^2$.
The Hamilton product is not commutative but associative and thus $\mathbb{H}$ is an associative  division algebra over $\mathbb{R}$.

Suppose $A=A_0+A_1i+A_2j+A_3k\in\mathbb{H}^{m\times n}$ is a quaternion matrix, where $A_0, A_1, A_2, A_3\in \mathbb{R}^{m\times n}$. A real counterpart of $A$ is defined by
\begin{equation}\label{A:rc}
\Gamma_A =
\left[
\begin{array}{rrrr}
A_0 & A_2 & A_1 & A_3 \\
-A_2 & A_0 & A_3 & -A_1 \\
-A_1 & -A_3 & A_0 & A_2 \\
-A_3 & A_1 & -A_2 & A_0 \\
\end{array}
\right].
\end{equation}

Next, we recall the definitions of JRS-symmetry and JRS-symplecticity \cite{jwl13}. Let
\[
J_n=\left[
\begin{array}{rrrr}
0 & 0 & -I_n & 0 \\
0 & 0 & 0&-I_n  \\
I_n& 0 & 0 & 0 \\
0 &I_n & 0&  0
\end{array}
\right],\;
R_n=\left[
\begin{array}{rrrr}
0&-I_n & 0 & 0 \\
I_n & 0 & 0& 0 \\
0 & 0 & 0 & I_n \\
0 & 0& -I_n& 0
\end{array}
\right],\;
S_n=\left[
\begin{array}{rrrr}
0& 0& 0 & -I_n \\
0 & 0 &I_n  & 0 \\
0 & -I_n & 0 &0 \\
I_n& 0&0& 0
\end{array}
\right].
\]
A matrix $\Omega\in \mathbb{R}^{4n\times4n}$ is called JRS-symmetric if $J_n\Omega J_n^T=\Omega$, $R_n\Omega R_n^T=\Omega$ and $S_n\Omega S_n^T=\Omega$.
A matrix $\Omega\in \mathbb{R}^{4n\times4n}$ is called JRS-symplectic if $\Omega J_n\Omega^T=J_n, \Omega R_n\Omega^T=R_n$ and $\Omega S_n\Omega^T=S_n$. A matrix $\Omega\in\mathbb{R}^{4n\times4n}$ is called orthogonal JRS-symplectic if it is orthogonal and JRS-symplectic.

In the rest of this section, we recall some basic results on the relationship between a quaternion matrix and its real counterpart. First, we have the following  properties of the real counterparts of quaternion matrices \cite{jwl13,j05,wwf08}.
\begin{lemma}\label{lem21}
Let $F,G\in\mathbb{H}^{m\times n}$, $H\in\mathbb{H}^{n\times s}$,  $W\in\mathbb{H}^{n\times n}$, $\alpha\in\mathbb{R}$. Then
	\begin{itemize}
		\item[{\rm (1)}] $\Gamma_{F+G}=\Gamma_F+\Gamma_G$; $\Gamma_{\alpha G}=\alpha
		\Gamma_G$; $\Gamma_{GH}=\Gamma_G\Gamma_H$.
        \item[{\rm (2)}] $\Gamma_{G^*}=\Gamma_G^T$.
        \item[{\rm (3)}] $\Gamma_W$ is JRS-symmetric.
		\item[{\rm (4)}] $W$ is unitary if $\Gamma_W$ is orthogonal.
		\item[{\rm (5)}] If $\Gamma_G$ is orthogonal, then it is also orthogonal JRS-symplectic.
	\end{itemize}
\end{lemma}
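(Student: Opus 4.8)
The plan is to obtain parts (1)--(3) by direct computation with the block form \eqref{A:rc} and then to read off (4) and (5) as purely algebraic consequences. For (1), the identities $\Gamma_{F+G}=\Gamma_F+\Gamma_G$ and $\Gamma_{\alpha G}=\alpha\Gamma_G$ are immediate, because every block of $\Gamma_A$ equals $\pm$ one of the real component matrices $A_0,A_1,A_2,A_3$ and the assignment $A\mapsto(A_0,A_1,A_2,A_3)$ is $\R$-linear. For the multiplicative law I would write $G=G_0+G_1i+G_2j+G_3k$, $H=H_0+H_1i+H_2j+H_3k$, and first record the four real component matrices of the Hamilton product $GH$ obtained from the scalar multiplication table (that formula carries over verbatim to matrices as long as the left--right order of the real factors is kept); substituting them into \eqref{A:rc} and expanding the $4\times4$ array product $\Gamma_G\Gamma_H$, the two sides match block by block. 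Since $\Gamma$ is $\R$-bilinear in the component matrices, an equivalent and shorter route is to verify the homomorphism identity only on the scalar generators $1,i,j,k$, i.e.\ to check that the $4\times4$ real matrices $\Gamma_1=I_4,\Gamma_i,\Gamma_j,\Gamma_k$ reproduce the quaternion table ($\Gamma_i^2=\Gamma_j^2=\Gamma_k^2=-I_4$, $\Gamma_i\Gamma_j=\Gamma_k$, and so on), the general case then following by bilinearity.

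Part (2) is handled in the same spirit: $G^*=\bar G^{\,T}$ gives $(G^*)_0=G_0^T$ and $(G^*)_\ell=-G_\ell^T$ for $\ell=1,2,3$, and plugging these into \eqref{A:rc} yields precisely the block transpose of $\Gamma_G$ (equivalently, $\Gamma_1$ is symmetric while $\Gamma_i,\Gamma_j,\Gamma_k$ are skew-symmetric). For (3), I would note that the maps $X\mapsto J_nXJ_n^T$, $X\mapsto R_nXR_n^T$, $X\mapsto S_nXS_n^T$ act on the $4\times4$ array of $n\times n$ blocks of $X$ by fixed signed permutations that are independent of $n$ (each of $J_n,R_n,S_n$ has exactly one nonzero block, namely $\pm I_n$, in every block row and column). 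Hence it suffices to check that these signed permutations carry the block pattern of \eqref{A:rc} to itself, i.e.\ to do the computation with $1\times1$ blocks and a single quaternion with symbolic components, which gives $J_n\Gamma_WJ_n^T=R_n\Gamma_WR_n^T=S_n\Gamma_WS_n^T=\Gamma_W$.

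Granting (1)--(3), parts (4) and (5) are short. The map $\Gamma$ is injective (the blocks of $\Gamma_A$ already display $A_0,A_1,A_2,A_3$) and $\Gamma_{I_n}=I_{4n}$; so if $\Gamma_W$ is orthogonal then, by (2) and (1), $\Gamma_{W^*W}=\Gamma_{W^*}\Gamma_W=\Gamma_W^T\Gamma_W=I_{4n}=\Gamma_{I_n}$, whence $W^*W=I_n$, i.e.\ $W$ is unitary, proving (4). For (5), orthogonality of $J_n,R_n,S_n$ turns the JRS-symmetry of (3) into the commutation relations $J_n\Gamma_G=\Gamma_GJ_n$, $R_n\Gamma_G=\Gamma_GR_n$, $S_n\Gamma_G=\Gamma_GS_n$; combining these with the assumed orthogonality $\Gamma_G\Gamma_G^T=I_{4n}$ gives $\Gamma_GJ_n\Gamma_G^T=J_n\Gamma_G\Gamma_G^T=J_n$ and likewise $\Gamma_GR_n\Gamma_G^T=R_n$, $\Gamma_GS_n\Gamma_G^T=S_n$, so $\Gamma_G$ is JRS-symplectic and, being orthogonal by hypothesis, orthogonal JRS-symplectic.

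The only genuinely laborious steps are the bookkeeping in (1) --- either the full $4\times4$ block expansion of $\Gamma_G\Gamma_H$ or the sixteen products in the multiplication table of $\Gamma_i,\Gamma_j,\Gamma_k$ --- and the signed-permutation check in (3); I expect (3) to be the most error-prone, since the block ordering in \eqref{A:rc} (the first block row is $A_0,A_2,A_1,A_3$, not $A_0,A_1,A_2,A_3$) is unusual and must be tracked carefully against the definitions of $J_n$, $R_n$, $S_n$.
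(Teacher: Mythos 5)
Your proof is correct, but note that the paper does not actually prove Lemma \ref{lem21}: it is quoted as known background with citations to \cite{jwl13,j05,wwf08}, so there is no in-paper argument to compare against, and a self-contained verification like yours is exactly what a reader would have to supply. Your organization is the standard one and all the steps go through. In particular, the reduction of the product law in (1) to the generators is legitimate and is most cleanly justified by writing $\Gamma_A=\sum_{\ell=0}^3 M_\ell\otimes A_\ell$ with fixed $4\times4$ sign matrices $M_0=I_4$, $M_1=\Gamma_i$, $M_2=\Gamma_j$, $M_3=\Gamma_k$; then $\Gamma_G\Gamma_H=\sum_{\ell,m}(M_\ell M_m)\otimes(G_\ell H_m)$, the real blocks commute with the quaternion units, and the whole identity $\Gamma_{GH}=\Gamma_G\Gamma_H$ collapses to the multiplication table of $M_1,M_2,M_3$ (which does reproduce, not reverse, the quaternion table for the block ordering in (\ref{A:rc})). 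The same tensor decomposition makes (2) the statement that $M_0$ is symmetric and $M_1,M_2,M_3$ skew, and makes your block-permutation reduction in (3) rigorous, since $J_n=J\otimes I_n$ etc., so conjugation acts only on the $M_\ell$ factors and the $n=1$ check suffices. Two small points worth making explicit: in (5), ``$\Gamma_G$ orthogonal'' forces $\Gamma_G$ (hence $G$) to be square, so the JRS-symmetry from (3) really is available, and your passage from $J_n\Gamma_GJ_n^T=\Gamma_G$ to $J_n\Gamma_G=\Gamma_GJ_n$ uses the orthogonality of $J_n,R_n,S_n$, which should be stated (it is immediate since they are signed permutation matrices); in (4), since $W$ is square, $W^*W=I_n$ already gives unitarity, but if you want both identities you can equally apply $\Gamma$-injectivity to $\Gamma_W\Gamma_W^T=I_{4n}$.
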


On the SVD of a quaternion matrix, we have the following result \cite[Theorem 7.2]{zhangf97}.
\begin{lemma}\label{lem22}
Let $A\in\mathbb{H}^{m\times n}$ be a quaternion matrix with $\rank(A)=r$. Then there exist unitary quaternion matrices $U\in\mathbb{H}^{m\times m}$  and $V\in\mathbb{H}^{n\times n}$ such that
	\begin{equation}
	U^*AV=\left[
	\begin{array}{rr}
	\Sigma_r& 0 \\
	0 & 0\\
	\end{array}
	\right],
	\end{equation}
where $\Sigma_r=\diag (\sigma_1, \sigma_2, \ldots, \sigma_r)$ and $\{\sigma_w\}_{w=1}^r$ are the positive singular values of $A$.
\end{lemma}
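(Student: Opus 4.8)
The statement is \cite[Theorem~7.2]{zhangf97}. The plan is to mimic the classical deflation proof of the real SVD, the only new ingredients being care with the noncommutativity of $\mathbb H$ and an occasional passage to real counterparts via Lemma~\ref{lem21}.

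First I would equip $\mathbb{H}^n$ with the inner product $\langle x,y\rangle:=y^*x$ and norm $\|x\|:=\sqrt{x^*x}$; under the identification $\mathbb{H}^n\cong\mathbb{R}^{4n}$ this is the ordinary Euclidean structure, so the unit sphere is compact and $x\mapsto\|Ax\|$ attains a maximum $\sigma_1:=\|Av_1\|$ at some unit $v_1$. If $\sigma_1=0$ then $A=0$, $r=0$, and the conclusion is vacuous; otherwise put $u_1:=Av_1\sigma_1^{-1}$, a unit vector, and extend $v_1$, $u_1$ to unitary matrices $V=[\,v_1\ \widehat V\,]$ and $U=[\,u_1\ \widehat U\,]$ by Gram--Schmidt over $\mathbb H$. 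Writing $B:=U^*AV$, its first column is $\sigma_1 e_1$ since $u_1^*Av_1=\sigma_1$ and $\widehat U^*Av_1=\widehat U^*u_1\sigma_1=0$, and its first row is $(\sigma_1,\ w^*)$ with $w:=(u_1^*A\widehat V)^*$. Testing $B$ on the unit vector $y:=(\sigma_1;\,w)/\sqrt{\sigma_1^2+\|w\|^2}$ (the real scalar factor commutes freely) gives $(By)_1=\sqrt{\sigma_1^2+\|w\|^2}$, so $\sqrt{\sigma_1^2+\|w\|^2}\le\|By\|=\|AVy\|\le\sigma_1$ by maximality, forcing $w=0$. Hence $U^*AV=\diag(\sigma_1)\oplus A_1$ with $\|A_1x\|\le\sigma_1$ for every unit $x$.

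Iterating the step on $A_1$, then on $A_2$, and so on, and absorbing the accumulated unitary factors, yields unitary $U,V$ with $U^*AV=\diag(\sigma_1,\dots,\sigma_p)\oplus 0$ and $\sigma_1\ge\cdots\ge\sigma_p>0$. Since $U,V$ are invertible, $p=\rank(A)=r$; and $A^*A=V\bigl(\diag(\sigma_1^2,\dots,\sigma_r^2)\oplus 0\bigr)V^*$, so the numbers $\sigma_w^2$ are exactly the nonzero eigenvalues of the Hermitian positive semidefinite matrix $A^*A$ and the $\sigma_w$ are the positive singular values of $A$, independent of all choices. This is the asserted decomposition.

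I expect the main obstacle to be the quaternionic Gram--Schmidt extension: verifying that an orthonormal set in $\mathbb{H}^n$ genuinely extends to a \emph{quaternion} unitary matrix, rather than merely to a real orthogonal one. The clean way around it is to run the completion on the real side and then descend --- build the real-orthogonal completion so that it is JRS-symmetric (using the structure in Lemma~\ref{lem21}), whence it is the real counterpart $\Gamma_V$ of a quaternion matrix $V$ that is unitary by Lemma~\ref{lem21}(4). Everything else is the classical argument with quaternion products kept in the correct order. Alternatively, one may bypass deflation entirely and deduce the lemma from the spectral theorem for $A^*A$ --- obtained by orthogonally diagonalising the real symmetric, JRS-symmetric matrix $\Gamma_{A^*A}$ --- by setting $\sigma_w=\sqrt{\lambda_w}$ and $u_w=Av_w\sigma_w^{-1}$; we refer to \cite{zhangf97} for a complete treatment.
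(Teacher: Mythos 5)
Your proof is correct, but it cannot be ``the same approach as the paper'' because the paper gives no proof at all: Lemma~\ref{lem22} is quoted directly from Zhang \cite[Theorem 7.2]{zhangf97}, whose own derivation goes through the complex adjoint representation of a quaternion matrix and pulls back the complex SVD. Your deflation argument is thus a genuinely different, self-contained route carried out entirely inside $\mathbb{H}^{m\times n}$: compactness of the unit sphere of $\mathbb{H}^n\cong\mathbb{R}^{4n}$ gives the maximizer $v_1$, and the key cancellation works precisely because the only scalars you ever need to commute ($\sigma_1$, $\sqrt{\sigma_1^2+\|w\|^2}$, and $w^*w=\|w\|^2$) are real, so the classical step $\sqrt{\sigma_1^2+\|w\|^2}\le\sigma_1\Rightarrow w=0$ survives noncommutativity; the identification of $p$ with $\rank(A)$ and of the $\sigma_w$ with the singular values via $A^*A$ is also sound. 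Two small remarks. First, the Gram--Schmidt extension you flag as the main obstacle is in fact unproblematic: $\mathbb{H}^n$ is a free right $\mathbb{H}$-module with the $\mathbb{H}$-valued inner product $\langle x,y\rangle=y^*x$, and the usual process goes through verbatim with projection coefficients multiplied on the right, e.g.\ $w-v_1(v_1^*w)\perp v_1$; the detour through real counterparts (orthogonal completion with the block structure (\ref{A:rc}), i.e.\ JRS-symmetric, then Lemma~\ref{lem21}(4)) also works but is heavier than needed. Second, when $\sigma_1=0$ the conclusion is not vacuous but holds trivially with $r=0$ and arbitrary unitary $U,V$. What each approach buys: the citation keeps the paper short and uses only the statement, while your argument (or your alternative via the spectral theorem for the Hermitian matrix $A^*A$) is elementary, avoids the complex representation entirely, and is closer in spirit to the one-sided Jacobi viewpoint the paper actually exploits, where $A$ is made column-orthogonal and the SVD is read off by column scaling as in (\ref{eq:tildeA}).
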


Finally, we have the following result on the equivalence between the eigenvalue problem of a quaternion matrix and the eigenvalue problem of its real counterpart \cite{jwl13}.
\begin{lemma}\label{lem23}
Let $A=X+Yj$ be a quaternion matrix, where $X=A_0+A_1i$ and $Y=A_2+A_3i$ with $A_0, A_1, A_2, A_3\in \mathbb{R}^{n\times n}$. Then there exists a unitary quaternion
	matrix
	\[
	Q=\frac{1}{2}\left[
	\begin{array}{cccc}
	I_n & -jI_n & -iI_n & -kI_n \\
	I_n & jI_n & -iI_n & kI_n \\
	I_n & -jI_n & iI_n & kI_n \\
	I_n & jI_n & iI_n & -kI_n \\
	\end{array}
	\right]
	\]
	such that
	\begin{equation}\label{Nequa2.2}
	\Gamma_A=Q^*\left[
	\begin{array}{cccc}
	X+Yj & 0 & 0 & 0 \\
	0 & X-Yj & 0 & 0 \\
	0 & 0 & \bar{X}+\bar{Y}j & 0 \\
	0 & 0 & 0 & \bar{X}-\bar{Y}j \\
	\end{array}
	\right]Q.
	\end{equation}

\end{lemma}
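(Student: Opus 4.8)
The statement has two parts --- that $Q$ is unitary and that \eqref{Nequa2.2} holds --- and the plan is to dispatch both by a direct verification organized so that every computation stays at size $4\times4$. The key structural remark is that all three matrices in \eqref{Nequa2.2} are Kronecker products over $4\times4$ blocks. Writing $Q=P\otimes I_n$ with $P\in\mathbb{H}^{4\times4}$ the $4\times4$ array of entries in $\{\pm1,\pm i,\pm j,\pm k\}$ obtained from the displayed $Q$ by setting $I_n=1$, one sees by inspection of \eqref{A:rc} that $\Gamma_A=\sum_{\ell=0}^{3}E_\ell\otimes A_\ell$, where $E_0=I_4$ and $E_1,E_2,E_3\in\mathbb{R}^{4\times4}$ are the matrices of $\pm1$'s recording how $A_1,A_2,A_3$ sit inside \eqref{A:rc} (equivalently, by Lemma \ref{lem21}(1) applied to $A=\sum_\ell(\mu_\ell I_n)A_\ell$ with $(\mu_0,\mu_1,\mu_2,\mu_3)=(1,i,j,k)$, one has $E_\ell=\Gamma_{\mu_\ell}$). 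Likewise, substituting $X=A_0+A_1i$ and $Y=A_2+A_3i$ into the diagonal blocks $X\pm Yj$ and $\bar X\pm\bar Yj$ of the right-hand side, which I denote $D$, gives $D=\sum_{\ell=0}^{3}\Lambda_\ell\otimes A_\ell$ with $\Lambda_0=I_4$, $\Lambda_1=\diag(i,i,-i,-i)$, $\Lambda_2=\diag(j,-j,j,-j)$, $\Lambda_3=\diag(k,-k,-k,k)$.

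I would first check that $Q$ is unitary. Since $Q^*Q=(P^*P)\otimes I_n$, this reduces to $P^*P=I_4$, i.e.\ to orthonormality of the columns of $P$ in the quaternion inner product --- a one-line $4\times4$ computation; alternatively, note $P=\frac{1}{2}H\Delta$ with $H\in\mathbb{R}^{4\times4}$ symmetric satisfying $H^2=4I_4$ and $\Delta=\diag(1,-j,-i,-k)$ unitary, so that $P^*P=\Delta^*(\frac{1}{4}H^2)\Delta=\Delta^*\Delta=I_4$. As $Q$ is square, $Q^*Q=I_{4n}$ also gives $QQ^*=I_{4n}$. With unitarity in hand, inserting the two Kronecker decompositions into $\Gamma_A=Q^*DQ$ and using the mixed-product rule $(M\otimes I_n)(N\otimes A_\ell)(L\otimes I_n)=(MNL)\otimes A_\ell$ (legitimate here because the complementary factors $I_n$ and $A_\ell$ are real and hence commute with quaternion scalars), the identity becomes
\[
\sum_{\ell=0}^{3}E_\ell\otimes A_\ell=\sum_{\ell=0}^{3}\bigl(P^*\Lambda_\ell P\bigr)\otimes A_\ell .
\]
Since $A_0,A_1,A_2,A_3$ are arbitrary, this is equivalent to the four $4\times4$ quaternion identities $P^*\Lambda_\ell P=E_\ell$, $\ell=0,1,2,3$; the case $\ell=0$ is once more $P^*P=I_4$, and the remaining three are explicit products of $4\times4$ quaternion matrices, which I would simply multiply out. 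This finishes the proof.

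The argument carries no conceptual difficulty; the only real obstacle is bookkeeping. Hamilton multiplication is noncommutative ($ij=k$ but $ji=-k$, and so on), so in forming $P^*\Lambda_\ell P$ the order of the factors must be respected at every step, and a single sign slip breaks the identity. Two convenient internal checks are that each $P^*\Lambda_\ell P$ must emerge as a \emph{real} matrix --- all $i,j,k$ contributions cancelling --- carrying exactly the sign pattern of $E_\ell$, and that $P^*\Lambda_1P$, $P^*\Lambda_2P$, $P^*\Lambda_3P$ must reproduce among themselves the quaternion relations (their squares equal $-I_4$, and the product of the first two equals the third), mirroring $\Lambda_1^2=\Lambda_2^2=\Lambda_3^2=-I_4$ and $\Lambda_1\Lambda_2=\Lambda_3$.
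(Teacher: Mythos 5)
Your argument is sound, but note that the paper itself offers no proof of Lemma \ref{lem23} to compare against: it is quoted from the reference [jwl13] (Jia, Wei, Ling), so the verification has to come from somewhere, and yours is a legitimate one. The two Kronecker decompositions you set up are correct: with $E_\ell=\Gamma_{\mu_\ell}$ read off from \eqref{A:rc} one indeed has $\Gamma_A=\sum_{\ell=0}^{3}E_\ell\otimes A_\ell$, and substituting $X=A_0+A_1i$, $Y=A_2+A_3i$ into the diagonal blocks gives exactly $\Lambda_0=I_4$, $\Lambda_1=\diag(i,i,-i,-i)$, $\Lambda_2=\diag(j,-j,j,-j)$, $\Lambda_3=\diag(k,-k,-k,k)$, since $X+Yj=A_0+A_1i+A_2j+A_3k$, $X-Yj=A_0+A_1i-A_2j-A_3k$, etc. You are also right to flag the one genuinely delicate point, namely that the mixed-product rule for quaternion Kronecker factors is only being used with real matrices ($I_n$ and $A_\ell$) in the slots where commutation with quaternion scalars is needed; without that remark the reduction to $4\times 4$ identities would be suspect. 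The factorization $P=\tfrac12 H\Delta$ with $H$ the symmetric Hadamard matrix and $\Delta=\diag(1,-j,-i,-k)$ does give $P^*P=I_4$ cleanly. The only thing actually missing is the content of the three products $P^*\Lambda_\ell P=E_\ell$, which you assert rather than display; they are routine and do come out right (for instance $P^*\Lambda_1P$ works out to the sign pattern of $A_1$ in \eqref{A:rc}, with all imaginary parts cancelling as your internal check predicts), so writing out these $4\times4$ multiplications, or at least one of them with a statement that the others are analogous, would complete the proof.
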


\section{Structure-preserving one-sided cyclic Jacobi algorithm }\label{sec3}
In this section, we present a structure-preserving one-sided cyclic Jacobi algorithm for computing the SVD of a quaternion matrix $A=A_0+A_1i+A_2j+A_3k\in\mathbb{H}^{m\times n}$,  where $A_0, A_1, A_2, A_3\in \mathbb{R}^{m\times n}$. The proposed structure-preserving one-sided cyclic Jacobi algorithm involves a sequence of orthogonal JRS-symplectic transformations $\Gamma_A\gets \Gamma_A\Gamma_G$ such that the updated $\Gamma_A$ is closer to a column-orthogonal matrix than its predecessor. When the updated $\Gamma_A$ has sufficiently orthogonal columns, the column scaling of the updated $A$ leads to the SVD of $A$.

For simplicity, we assume that $m\ge n$. The real counterpart $\Gamma_A$ of $A$ is defined by $(\ref{A:rc})$. A one-sided cyclic Jacobi algorithm includes (a) choosing an index pair $(p,q)$ such that $1\le p<q\le n$, (b) computing a cosine-sine group $(c_r,s_0,s_1,s_2,s_3)$ such that
\BE\label{def:rj}
G(p,q,\theta)=I_n+[ \bfe_p, \bfe_q]
    \left[
          \begin{array}{cc}
            c_r-1 & s \\
            -\bar{s} & c_r-1 \\
          \end{array}
        \right]\left[
                 \begin{array}{c}
                   \bfe_p^T \\
                   \bfe_q^T \\
                 \end{array}
               \right]\in\mathbb{H}^{n\times n}
\EE
is a unitary quaternion matrix and the $p$-th and $q$-th columns of $AG(p,q,\theta)$ are orthogonal,  where $\bfe_t$ is the $t$-th unit vector and $s=s_0+s_1i+s_2j+s_3k\in\mathbb{H}$ with $c_r^2+|s|^2=1$ (In fact, this corresponds zeroing the $(p,q)$ and $(q,p)$ entries of $A^*A$ by using  $G(p,q,\theta)^*A^*AG(p,q,\theta)$), and (c) overwriting $A$ with $AG(p,q,\theta)$.

Our structure-preserving one-sided cyclic Jacobi algorithm aims to determine a sequence of  orthogonal JRS-symplectic Jacobi matrices $\{\Gamma_{G^{(\ell)}}\in\mathbb{R}^{4n\times 4n}\}_{\ell=1}^\eta$ such that  $\Gamma_{\widetilde{A}}=\Gamma_A\Gamma_{G^{(1)}}\Gamma_{G^{(2)}}\cdots$ $\Gamma_{G^{(\eta)}}$ has sufficiently orthogonal columns, which corresponds the off-diagonal entries of $\Gamma_{\widetilde{A}}^T\Gamma_{\widetilde{A}}=\Gamma_{G^{(\eta)}}^T\cdots\Gamma_{G^{(2)}}^T\Gamma_{G^{(1)}}^T\Gamma_A^T\Gamma_A\Gamma_{G^{(1)}}\Gamma_{G^{(2)}}\cdots\Gamma_{G^{(\eta)}}$ are sufficiently close to zeros. Then, by extracting the first row partitions of $ \Gamma_{\widetilde{A}}$, i.e.,
\[
[\widetilde{A}_0,  \widetilde{A}_2, \widetilde{A}_1, \widetilde{A}_3],\quad \widetilde{A}_w\in\mathbb{R}^{m\times n}\quad w=0,1,2,3,
\]
we get the updated quaternion matrix $\widetilde{A}=\widetilde{A}_0+\widetilde{A}_1i+\widetilde{A}_2j+\widetilde{A}_3k=AV$, where $V =G^{(1)}G^{(2)}\cdots$ $G^{(\eta)}$ is an $n\times n$ unitary quaternion matrix. Finally, the column scaling of $\widetilde{A}$ yields the SVD of $A$:
\begin{equation}\label{eq:tildeA}
AV=\widetilde{A} = U\Sigma,
\end{equation}
where $\Sigma=\diag (\sigma_1, \sigma_2, \ldots, \sigma_n)$ with $\sigma_w\ge 0$ for $w=1,\ldots,n$ and $U\in\mathbb{H}^{m\times n}$ satisfies $U^*U=I_n$.

The following theorem presents the orthogonalization of any two columns of an $m\times n$ quaternion matrix.
\begin{theorem}\label{theo:GG}
Let $A(p,q)=\left[\ba_p, \ba_q\right]$, where $\ba_w=\ba_{w0}+\ba_{w1}i+\ba_{w2}j+\ba_{w3}k\in\mathbb{H}^m$ is the $w$-th column of an $m\times n$ quaternion matrix $A$ for  $w=p,q$.
If $\ba_p^*\ba_q\neq 0$, then there exists a $2$-by-$2$  unitary  quaternion matrix given by
\[
G(p,q;\theta)=
\left
[\begin{array}{cc}
c_r & s\\
-\bar{s} & c_r
\end{array}
\right]
\]
such that $\widetilde{A}(p,q):=A(p,q)G(p,q;\theta)$ has orthogonal columns, where $c_r=\cos(\theta)\in\mathbb{R}$ and $s=s_0+s_1i+s_2j+s_3k\in\mathbb{H}$ with $c_r^2+|s|^2=1$.
\end{theorem}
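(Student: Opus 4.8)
The plan is to reduce the orthogonalization of two quaternion columns to a $2\times 2$ Hermitian eigenvalue problem. First I would form the $2\times 2$ quaternion Gram matrix
\[
M := A(p,q)^*A(p,q) = \left[\begin{array}{cc} \ba_p^*\ba_p & \ba_p^*\ba_q \\ \ba_q^*\ba_p & \ba_q^*\ba_q \end{array}\right]
= \left[\begin{array}{cc} \alpha & \beta \\ \bar\beta & \gamma \end{array}\right],
\]
where $\alpha=\|\ba_p\|^2\ge 0$ and $\gamma=\|\ba_q\|^2\ge 0$ are real, $\beta = \ba_p^*\ba_q\in\mathbb{H}$ is the (nonzero) off-diagonal quaternion, and $M$ is quaternion Hermitian ($M^*=M$). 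The columns of $\widetilde A(p,q)=A(p,q)G(p,q;\theta)$ are orthogonal precisely when $G(p,q;\theta)^*MG(p,q;\theta)$ is diagonal, so the theorem amounts to diagonalizing a $2\times 2$ quaternion Hermitian matrix by a unitary congruence of the stated form.

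The key computational step is to choose $s=s_0+s_1i+s_2j+s_3k$ so that the off-diagonal entry of $G^*MG$ vanishes. Writing out $G^*MG$ with $G=\left[\begin{array}{cc} c_r & s \\ -\bar s & c_r\end{array}\right]$ (note $G^* = \left[\begin{array}{cc} c_r & -s \\ \bar s & c_r\end{array}\right]$ since $c_r$ is real), the $(1,2)$ entry is a quaternion expression in $c_r$, $s$, $\alpha$, $\beta$, $\gamma$. I would first peel off the phase of $\beta$: write $\beta=|\beta|\,\mu$ with $\mu\in\mathbb{H}$ a unit quaternion, and look for $s$ of the form $s = \sin(\theta)\,\mu$ for a suitable angle $\theta$; this is the quaternion analogue of the classical real/complex Jacobi choice and collapses the four real unknowns $s_0,\dots,s_3$ to a single angle together with the fixed unit direction $\mu$. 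Substituting, the vanishing of the $(1,2)$ entry reduces to the real scalar equation
\[
(\gamma-\alpha)\cos\theta\sin\theta + |\beta|\bigl(\cos^2\theta-\sin^2\theta\bigr)=0,
\]
i.e. $\tan(2\theta) = \dfrac{2|\beta|}{\alpha-\gamma}$ (with $\theta=\pi/4$ when $\alpha=\gamma$), which always has a solution since $|\beta|>0$; this determines $c_r=\cos\theta$ and $s=\sin\theta\,\mu$, and one checks directly that $c_r^2+|s|^2=\cos^2\theta+\sin^2\theta|\mu|^2=1$, so $G(p,q;\theta)$ is unitary ($G^*G=I_2$ follows from the same identity).

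The step I expect to require the most care is the quaternion algebra in expanding the $(1,2)$ entry of $G^*MG$: because Hamilton multiplication is noncommutative, one cannot simply quote the complex formulas, and the cross terms involving $s$, $\bar s$ and $\beta$ must be handled so that the ansatz $s=\sin\theta\,\mu$ genuinely makes the off-diagonal entry a \emph{real} multiple of the left side of the displayed equation (in particular its $i,j,k$ parts must cancel). The clean way to see this is to note that $\mu$ commutes with $\bar\mu$ and that $\beta=|\beta|\mu$, so every term in the $(1,2)$ entry, after factoring, is a real combination times $\mu$; I would organize the computation around the scalar subalgebra $\mathbb{R}[\mu]\cong\mathbb{C}$ generated by $\mu$, inside which the problem becomes exactly the classical complex Jacobi rotation. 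Once the off-diagonal entry is zero, $G^*MG$ is diagonal with nonnegative diagonal entries (being a congruence of the positive semidefinite $M$), which are the squared singular values, completing the proof.
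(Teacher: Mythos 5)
Your proposal is correct and takes essentially the same route as the paper: the paper also sets $s=\sin\theta\,\ba_p^*\ba_q/|\ba_p^*\ba_q|$ (your $s=\sin\theta\,\mu$) and verifies directly that $\widetilde{\ba}_p^*\widetilde{\ba}_q$ --- which is exactly the $(1,2)$ entry of $G^*MG$ in your notation --- collapses to $-c_r^2\,\ba_p^*\ba_q\,(t^2+2\tau t-1)$ with $\tau=(\ba_q^*\ba_q-\ba_p^*\ba_p)/(2|\ba_p^*\ba_q|)$ and $t=\tan\theta$, which vanishes by the classical Jacobi choice of $t$. The only nit is a sign: with your $G$ and $s=\sin\theta\,\mu$ the off-diagonal entry equals $\bigl((\alpha-\gamma)\cos\theta\sin\theta+|\beta|(\cos^2\theta-\sin^2\theta)\bigr)\mu$, so the angle condition should be $\tan 2\theta=2|\beta|/(\gamma-\alpha)$ (equivalently, replace $\theta$ by $-\theta$), which then matches the paper's $\tau$.
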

\begin{proof}
Note that
\begin{eqnarray} \label{def:at}
\widetilde{A}(p,q) &=& A(p,q)G(p,q;\theta)  \nonumber\\
&=&
\left[\ba_p, \ba_q\right]
\left[\begin{array}{rr}
c_r & s\\
-\overline{s} & c_r
\end{array}  \right] \nonumber \\
&=& \left[c_r\ba_p-\ba_q\overline{s}, \ba_ps+c_r\ba_q\right] \nonumber \\
&:=& \left[\widetilde{{\bf a}}_p, \widetilde{{\bf a}}_q \right].
\end{eqnarray}
By hypothesis  $\ba_p^*\ba_q\neq 0$ and thus $|\ba_p^*\ba_q|>0$. Define $c_r\in\mathbb{R}$ and $s=s_0+s_1i+s_2j+s_3k\in\mathbb{H}$  by
\BE\label{def:g}
\left\{
\begin{array}{c}
s_0 = \frac{\sin(\theta)}{|\ba_p^*\ba_q|}a_{pq0},\quad s_1= \frac{\sin(\theta)}{|\ba_p^*\ba_q|}a_{pq1},\quad s_2=\frac{\sin(\theta)}{|\ba_p^*\ba_q|}a_{pq2}, \quad s_3=\frac{\sin(\theta)}{|\ba_p^*\ba_q|}a_{pq3},\quad \sin(\theta)=tc_r,\\[3mm]
c_r=\cos(\theta)=\frac{1}{\sqrt{1+t^2}},\quad  |s|=\frac{|t|}{\sqrt{1+t^2}},\quad
t=\left\{
\begin{array}{ll}
\frac{1}{\tau+ \sqrt{1+\tau^2}}, & \mbox{if $\tau\ge 0$} \\[2mm]
\frac{1}{\tau- \sqrt{1+\tau^2}},& \mbox{if $\tau<0$}
\end{array}
\right., \quad \tau=\frac{\ba_q^*\ba_q-\ba_p^*\ba_p}{{2|\ba_p^*\ba_q|}},
\end{array}
\right.
\EE
where  $\ba_p^*\ba_q:=a_{pq0}+a_{pq1}i+a_{pq2}j+a_{pq3}k$ with
\[
\left\{
\begin{array}{lcl}
a_{pq0}&=&\ba_{p0}^T\ba_{q0}+\ba_{p1}^T\ba_{q1}+\ba_{p2}^T\ba_{q2}+\ba_{p3}^T\ba_{q3},\\
a_{pq1}&=&\ba_{p0}^T\ba_{q1}-\ba_{p1}^T\ba_{q0}-\ba_{p2}^T\ba_{q3}+\ba_{p3}^T\ba_{q2},\\
a_{pq2}&=&\ba_{p0}^T\ba_{q2}+\ba_{p1}^T\ba_{q3}-\ba_{p2}^T\ba_{q0}-\ba_{p3}^T\ba_{q1},\\
a_{pq3}&=&\ba_{p0}^T\ba_{q_3}-\ba_{p1}^T\ba_{q2}+\ba_{p2}^T\ba_{q1}-\ba_{p3}^T\ba_{q0}.
\end{array}
\right.
\]
It is easy to see that $G(p,q;\theta)$ is unitary and
\[
s =\frac{tc_r}{|\ba_p^*\ba_q|}\times \ba_p^*\ba_q\quad\mbox{and}\quad
t^2+2\tau t-1=0.
\]
Thus,
\begin{eqnarray}\label{apq:orth}
\widetilde{\ba}_p^*\widetilde{{\bf a}}_q&=& (c_r\ba_p-\ba_q\overline{s})^*(\ba_ps+c_r\ba_q) \nonumber\\
&=& c_r\ba_p^*\ba_ps +c_r^2\ba_p^*\ba_q-s\ba_q^*\ba_ps-c_r\ba_q^*\ba_qs \nonumber\\
&=&c_r\times \ba_p^*\ba_p\times \frac{tc_r}{|\ba_p^*\ba_q|}\times \ba_p^*\ba_q+c_r^2\times \ba_p^*\ba_q-c_r\times \ba_q^*\ba_q\times\frac{tc_r}{|\ba_p^*\ba_q|}\times \ba_p^*\ba_q \nonumber\\
&&-\frac{tc_r}{|\ba_p^*\ba_q|}\times \ba_p^*\ba_q\times  \ba_q^*\ba_p \times\frac{tc_r}{|\ba_p^*\ba_q|}\times \ba_p^*\ba_q  \nonumber\\
&=&-c_r^2\times \ba_p^*\ba_q\times\big(t^2+\frac{\ba_q^*\ba_q-\ba_p^*\ba_p}{|\ba_p^*\ba_q|}t-1\big) \nonumber\\
&=&-c_r^2\times \ba_p^*\ba_q\times(t^2+2\tau t-1) \nonumber\\
&=& 0.
\end{eqnarray}
\end{proof}

\begin{corollary}\label{cor:GG}
If the assumptions  in Theorem \ref{theo:GG} hold, then there exists a $2$-by-$2$  unitary  quaternion matrix
\[
G(p,q;\theta)=
\left
[\begin{array}{cc}
c_r & s\\
-\bar{s} & c_r
\end{array}
\right], \quad\mbox{ $c_r=\cos(\theta)\in\mathbb{R}$ and $s=s_0+s_1i+s_2j+s_3k\in\mathbb{H} $}
\]
such that
\[
\Gamma_{G(p,q;\theta)}^T\Gamma_{A(p,q)}^T\Gamma_{A(p,q)}\Gamma_{G(p,q;\theta)}
= \widetilde{B}(p,q;p,q)\oplus \widetilde{B}(p,q;p,q)\oplus \widetilde{B}(p,q;p,q)\oplus \widetilde{B}(p,q;p,q)
\]
for some
\[
\widetilde{B}(p,q;p,q)=
\left[
\begin{array}{cc}
b_{pp} & 0\\
0 & b_{qq}
\end{array}
\right].
\]
\end{corollary}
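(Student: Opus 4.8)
The plan is to reduce the claim to Theorem~\ref{theo:GG} together with parts (1)--(2) of Lemma~\ref{lem21} and the explicit form $(\ref{A:rc})$ of the real counterpart. The key observation is that once the two columns have been orthogonalized, the relevant Gram matrix is not merely Hermitian but genuinely real, and the real counterpart of a real $2\times 2$ matrix is precisely a four-fold direct sum of that matrix.

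First I would rewrite the left-hand side as the real counterpart of a single quaternion matrix. By Lemma~\ref{lem21}(2), $\Gamma_{A(p,q)}^T=\Gamma_{A(p,q)^*}$ and $\Gamma_{G(p,q;\theta)}^T=\Gamma_{G(p,q;\theta)^*}$, and by Lemma~\ref{lem21}(1) the product of real counterparts is the real counterpart of the product. Hence
\[
\Gamma_{G(p,q;\theta)}^T\Gamma_{A(p,q)}^T\Gamma_{A(p,q)}\Gamma_{G(p,q;\theta)}
=\Gamma_{G(p,q;\theta)^*A(p,q)^*A(p,q)G(p,q;\theta)}
=\Gamma_{\widetilde{A}(p,q)^*\widetilde{A}(p,q)},
\]
where $\widetilde{A}(p,q)=A(p,q)G(p,q;\theta)=[\widetilde{\ba}_p,\widetilde{\ba}_q]$ is exactly the matrix produced in Theorem~\ref{theo:GG}. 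Next I would identify $\widetilde{A}(p,q)^*\widetilde{A}(p,q)$ as a real diagonal $2\times 2$ matrix: its $(1,2)$ entry is $\widetilde{\ba}_p^*\widetilde{\ba}_q$, which vanishes by Theorem~\ref{theo:GG}; its $(2,1)$ entry is the conjugate $\overline{\widetilde{\ba}_p^*\widetilde{\ba}_q}=0$; and its diagonal entries $\widetilde{\ba}_p^*\widetilde{\ba}_p$ and $\widetilde{\ba}_q^*\widetilde{\ba}_q$ are nonnegative real numbers. Setting $b_{pp}=\widetilde{\ba}_p^*\widetilde{\ba}_p$ and $b_{qq}=\widetilde{\ba}_q^*\widetilde{\ba}_q$ gives $\widetilde{A}(p,q)^*\widetilde{A}(p,q)=\widetilde{B}(p,q;p,q)$ with $\widetilde{B}(p,q;p,q)$ as in the statement, and crucially this matrix has zero $i$-, $j$-, and $k$-parts.

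Finally I would apply the definition $(\ref{A:rc})$ to the real matrix $\widetilde{B}(p,q;p,q)$: taking $A_1=A_2=A_3=0$ there, all off-diagonal blocks of $\Gamma_A$ vanish, so
\[
\Gamma_{\widetilde{B}(p,q;p,q)}=\widetilde{B}(p,q;p,q)\oplus\widetilde{B}(p,q;p,q)\oplus\widetilde{B}(p,q;p,q)\oplus\widetilde{B}(p,q;p,q),
\]
which is exactly the asserted identity. I do not expect any real obstacle here: the only points requiring care are the bookkeeping of transposes versus conjugate transposes when passing through Lemma~\ref{lem21}, and the observation—immediate from Theorem~\ref{theo:GG}—that $\widetilde{A}(p,q)^*\widetilde{A}(p,q)$ is a genuinely real matrix, which is precisely what makes its real counterpart split into four identical diagonal blocks.
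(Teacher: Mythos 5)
Your proposal is correct and follows essentially the same route as the paper: rewrite the left-hand side via Lemma \ref{lem21} as $\Gamma_{\widetilde{A}^*(p,q)\widetilde{A}(p,q)}$, note that Theorem \ref{theo:GG} makes $\widetilde{A}^*(p,q)\widetilde{A}(p,q)$ a real diagonal matrix with entries $b_{pp}=\widetilde{\ba}_p^*\widetilde{\ba}_p$, $b_{qq}=\widetilde{\ba}_q^*\widetilde{\ba}_q$, and then read off the fourfold direct sum from the definition (\ref{A:rc}). Your only addition is spelling out explicitly that the real counterpart of a real matrix has vanishing off-diagonal blocks, which the paper leaves implicit.
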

\begin{proof}
By Theorem \ref{theo:GG} we know that $\widetilde{A}(p,q)=A(p,q)G(p,q;\theta)$ has orthogonal columns for $G(p,q;\theta)$ with $c_r$ and $s$ defined as in (\ref{def:g}).
From (\ref{def:at}) and (\ref{apq:orth}) we have
\BE\label{def:bt}
\widetilde{B}(p,q;p,q):= \widetilde{A}^*(p,q)\widetilde{A}(p,q)
= \left[\begin{array}{rr}
\widetilde{\ba}_p^*\widetilde{{\bf a}}_p & \widetilde{\ba}_p^*\widetilde{{\bf a}}_q\\
\widetilde{\ba}_q^*\widetilde{{\bf a}}_p & \widetilde{\ba}_q^*\widetilde{{\bf a}}_q
\end{array}  \right] = \left[\begin{array}{rr}
\widetilde{\ba}_p^*\widetilde{{\bf a}}_p &0 \\
0 & \widetilde{\ba}_q^*\widetilde{{\bf a}}_q
\end{array}  \right].
\EE
This shows that $\widetilde{B}(p,q;p,q)$ is  a real diagonal matrix, where $\widetilde{\ba}_p=c_r\ba_p-\ba_q\overline{s}$ and $\widetilde{\ba}_q= \ba_ps+c_r\ba_q$ with $c_r$ and $s$ being defined in (\ref{def:g}). Using Lemma \ref{lem21} and  (\ref{def:bt}) we find
\begin{eqnarray*}
&&\Gamma_{G(p,q;\theta)}^T\Gamma_{A(p,q)}^T\Gamma_{A(p,q)}\Gamma_{G(p,q;\theta)} \\
&=&\Gamma_{\widetilde{A}^*(p,q)}\Gamma_{\widetilde{A}(p,q)} =  \Gamma_{\widetilde{A}^*(p,q)\widetilde{A}(p,q)} =\Gamma_{\widetilde{B}(p,q;p,q)}\\
&=& \widetilde{B}(p,q;p,q)\oplus \widetilde{B}(p,q;p,q)\oplus \widetilde{B}(p,q;p,q)\oplus \widetilde{B}(p,q;p,q),
\end{eqnarray*}
where $b_{pp}=\widetilde{\ba}_p^*\widetilde{{\bf a}}_p$ and  $b_{qq}=\widetilde{\ba}_q^*\widetilde{{\bf a}}_q$.
\end{proof}

\begin{remark}
For $t$ defined in (\ref{def:g}), we see that the rotation angle satisfies $|\theta|\le\pi/4$.
Also, from (\ref{def:at}) and (\ref{def:bt}) we have
\begin{eqnarray*}
\left[\begin{array}{rr}
\widetilde{\ba}_p^*\widetilde{{\bf a}}_p &0 \\
0 & \widetilde{\ba}_q^*\widetilde{{\bf a}}_q
\end{array}  \right]
&=& \widetilde{A}^*(p,q)\widetilde{A}(p,q)\\
&=& G^*(p,q;\theta)A^*(p,q)A(p,q)G(p,q;\theta)\\
&=& \left[\begin{array}{rr}
c_r & s\\
-\overline{s} & c_r
\end{array}  \right]^*\left[\begin{array}{rr}
\ba_p^*\ba_p & \ba_p^*\ba_q\\
\ba_q^*\ba_p & \ba_q^*\ba_q
\end{array}  \right]\left[\begin{array}{rr}
c_r & s\\
-\overline{s} & c_r
\end{array}  \right].
\end{eqnarray*}
Since the Frobenius norm is unitary invariant we obtain
\begin{equation}\label{equ:FroUni}
(\widetilde{\ba}_p^*\widetilde{{\bf a}}_p)^2+(\widetilde{\ba}_q^*\widetilde{{\bf a}}_q)^2 = (\ba_p^*\ba_p)^2+ 2|\ba_p^*\ba_q|^2+(\ba_q^*\ba_q)^2.
\end{equation}
\end{remark}

\begin{remark} \label{rem:offa}
We observe from Theorem \ref{theo:GG} that the matrix $\widetilde{A}^*\widetilde{A}$ agrees with $A^*A$ except in the $p$-th and $q$-th rows and the $p$-th and $q$-th columns. Let $A=\left[\ba_1, \ba_2, \ldots, \ba_n\right]$ and $\widetilde{A}=\left[\widetilde{\ba}_1, \widetilde{\ba}_2, \ldots,\widetilde{\ba}_n\right]$.
Using (\ref{equ:FroUni}) we have
\begin{eqnarray} \label{off}
\off(\widetilde{A}^*\widetilde{A})^2 &=& \|\widetilde{A}^*\widetilde{A}\|_F^2-\sum\limits_{t=1}^n (\widetilde{\ba}_t^*\widetilde{\ba}_t)^2 \nonumber\\
&=& \|A^*A\|_F^2 -\sum\limits_{t=1}^n (\ba_t^*\ba_t)^2 +\big((\ba_p^*\ba_p)^2+(\ba_q^*\ba_q)^2 -(\widetilde{\ba}_p^*\widetilde{\ba}_p)^2-(\widetilde{\ba}_q^*\widetilde{\ba}_q)^2\big) \nonumber\\
&=& \off(A^*A)^2-2|\ba_p^*\ba_q|^2.
\end{eqnarray}
Since $\|\Gamma_A^T\Gamma_A\|_F^2=4\|A^*A\|_F^2$, we know that $\off(\Gamma_A^T\Gamma_A)^2=4\off(A^*A)^2$. Using (\ref{off}), this implies
that $\Gamma_A^T\Gamma_A$ is closer to a diagonal matrix with each  orthogonal JRS-symplectic Jacobi rotation.
\end{remark}

Based on Theorem \ref{theo:GG} and Corollary \ref{cor:GG}, we present the following algorithm for generating a $2$-by-$2$   unitary  quaternion Jacobi matrix for orthogonalizing any two columns of an $m\times n$ quaternion matrix. This algorithm needs $96m+30$ operations.

\begin{algorithm}\label{alg:1}
{\rm Given $A(p,q)=
\left[\ba_p, \ba_q\right]$, where $\ba_w=\ba_{w0}+\ba_{w1}i+\ba_{w2}j+\ba_{w3}k\in\mathbb{H}^m$ is the $w$-th column of an $m\times n$ quaternion matrix $A$ for  $w=p,q$, this algorithm computes a cosine-sine group $(c_r,s_0,s_1,s_2,s_3)$ such that  $\widetilde{A}(p,q)=A(p,q)G(p,q;\theta)$ has two orthogonal columns. }
\begin{flushleft}
{\bf function}~~{\bf $(c_r,s_0,s_1,s_2,s_3)={\bf GJSJR}(\ba_{p0},\ba_{p1},\ba_{p2},\ba_{p3},\ba_{q0},\ba_{q1},\ba_{q2},\ba_{q3})$}\\
~~~~$a_{pp0}=\ba_{p0}^T\ba_{p0}+\ba_{p1}^T\ba_{p1}+\ba_{p2}^T\ba_{p2}+\ba_{p3}^T\ba_{p3}$,
$a_{pp1}=\ba_{p0}^T\ba_{p1}-\ba_{p1}^T\ba_{p0}-\ba_{p2}^T\ba_{p3}+\ba_{p3}^T\ba_{p2}$\\
~~~~$a_{pp2}=\ba_{p0}^T\ba_{p2}+\ba_{p1}^T\ba_{p3}-\ba_{p2}^T\ba_{p0}-\ba_{p3}^T\ba_{p1}$,
$a_{pp3}=\ba_{p0}^T\ba_{p3}-\ba_{p1}^T\ba_{p2}+\ba_{p2}^T\ba_{p1}-\ba_{p3}^T\ba_{p0}$\\
~~~~$a_{pp}=\sqrt{a_{pp0}^2+a_{pp1}^2+a_{pp2}^2+a_{pp3}^2}$ \\
~~~~$a_{qq0}=\ba_{q0}^T\ba_{q0}+\ba_{q1}^T\ba_{q1}+\ba_{q2}^T\ba_{q2}+\ba_{q3}^T\ba_{q3}$,
$a_{qq1}=\ba_{q0}^T\ba_{q1}-\ba_{q1}^T\ba_{q0}-\ba_{q2}^T\ba_{q3}+\ba_{q3}^T\ba_{q2}$\\
~~~~$a_{qq2}=\ba_{q0}^T\ba_{q2}+\ba_{q1}^T\ba_{q3}-\ba_{q2}^T\ba_{q0}-\ba_{q3}^T\ba_{q1}$,
$a_{qq3}=\ba_{q0}^T\ba_{q_3}-\ba_{q1}^T\ba_{q2}+\ba_{q2}^T\ba_{q1}-\ba_{q3}^T\ba_{q0}$\\
~~~~$a_{qq}=\sqrt{a_{qq0}^2+a_{qq1}^2+a_{qq2}^2+a_{qq3}^2}$ \\
~~~~$a_{pq0}=\ba_{p0}^T\ba_{q0}+\ba_{p1}^T\ba_{q1}+\ba_{p2}^T\ba_{q2}+\ba_{p3}^T\ba_{q3}$,
$a_{pq1}=\ba_{p0}^T\ba_{q1}-\ba_{p1}^T\ba_{q0}-\ba_{p2}^T\ba_{q3}+\ba_{p3}^T\ba_{q2}$\\
~~~~$a_{pq2}=\ba_{p0}^T\ba_{q2}+\ba_{p1}^T\ba_{q3}-\ba_{p2}^T\ba_{q0}-\ba_{p3}^T\ba_{q1}$,
$a_{pq3}=\ba_{p0}^T\ba_{q_3}-\ba_{p1}^T\ba_{q2}+\ba_{p2}^T\ba_{q1}-\ba_{p3}^T\ba_{q0}$\\
~~~~$a_{pq}=\sqrt{a_{pq0}^2+a_{pq1}^2+a_{pq2}^2+a_{pq3}^2}$ \\
~~~~{\bf if} $a_{pq}=0$\\
~~~~~~~$c_r=1$, $s_0=s_1=s_2=s_3=0$\\
~~~~{\bf else}\\
~~~~~~~$\tau=(a_{qq}-a_{pp})/(2a_{pq})$\\
~~~~~~~{\bf if} $\tau\ge 0$\\
~~~~~~~~~~$t=1/(\tau+\sqrt{1+\tau^{2}})$\\
~~~~~~~{\bf else}\\
~~~~~~~~~~$t=1/(\tau-\sqrt{1+\tau^{2}})$\\
~~~~~~{\bf end}\\
~~~~~~$c_r=1/\sqrt{1+t^{2}}$, $\delta=tc_r /a_{pq}$, $s_0 = \delta a_{pq0}$, $s_1= \delta a_{pq1}$, $s_2=\delta a_{pq2}$, $s_3=\delta a_{pq3}$\\
~~~~{\bf end}
\end{flushleft}
\end{algorithm}

Algorithm \ref{alg:1} gives a scheme for orthogonalizing columns $p$ and $q$ of an $m\times n$ quaternion matrix $A=\left[\ba_1, \ba_2, \ldots, \ba_n\right]$. One may choose $p$ and $q$ such that $|\ba_p^*\ba_q|$ is maximal as the classical Jacobi algorithm \cite[Algorithm 8.4.2]{gl89}.

The following algorithm describes a structure-preserving one-sided classical Jacobi algorithm, which is such that  a quaternion matrix  has sufficiently orthogonal columns.
\begin{algorithm}\label{alg:3}
{\rm Given an $m\times n$ quaternion matrix $A=A_0+A_1i+A_2j+A_3k\in\mathbb{H}^{m\times n}$ and a tolerance $\tol>0$, this algorithm overlaps the real counterpart $\Gamma_A$ by $\Gamma_A\widetilde{V}$, where $\widetilde{V}$ is  orthogonal and $\off(\widetilde{V}^T \Gamma_A^T\Gamma_A \widetilde{V})\le\tol\cdot \|\Gamma_A^T\Gamma_A\|_F$.}
\begin{flushleft}
$\widetilde{V}=\Gamma_{I_n}$, $\zeta=\tol\cdot\|\Gamma_A^T\Gamma_A\|_F$ \\
{\bf while}~ $\off(\Gamma_A^T\Gamma_A)>\zeta$\\
~~~~~~{\rm Choose $(p,q)$ so $|\ba_p^*\ba_q|=\max_{u\neq v}|\ba_u^*\ba_v|$}\\
~~~~~~~$\ba_{p0}=A_0(:,p)$, $\ba_{p1}=A_1(:,p)$, $\ba_{p2}=A_2(:,p)$, $\ba_{p3}=A_3(:,p)$\\
~~~~~~~$\ba_{q0}=A_0(:,q)$, $\ba_{q1}=A_1(:,q)$, $\ba_{q2}=A_2(:,q)$, $\ba_{q3}=A_3(:,q)$\\
~~~~~~~$(c_r,s_0,s_1,s_2,s_3)={\bf GJSJR}(\ba_{p0},\ba_{p1},\ba_{p2},\ba_{p3},\ba_{q0},\ba_{q1},\ba_{q2},\ba_{q3})$\\
~~~~~~~$\Gamma_A=\Gamma_A\Gamma_{G(p,q,\theta)}$\\
~~~~~~~$\widetilde{V} = \widetilde{V}\Gamma_{G(p,q,\theta)}$\\
{\bf end}
\end{flushleft}
\end{algorithm}

Algorithm \ref{alg:3} gives the following basic iterative scheme:
\BE\label{it:k1}
\Gamma_{A^{(\ell+1)}}=\Gamma_{A^{(\ell)}}\Gamma_{G^{(\ell)}},\quad \ell=0,1,2,\ldots,
\EE
where $A^{(0)}=A$ and $G^{(\ell)}\in\mathbb{H}^{n\times n}$ is a unitary quaternion matrix defined in (\ref{def:rj}) with the cosine-sine group $(c_r,s_0,s_1,s_2,s_3)$ being generated by Algorithm \ref{alg:1}. Algorithm \ref{alg:3} may be seen as a structure-preserving Jacobi algorithm for solving the eigenvalue problem of an $n\times n$ quaternion Hermitian matrix $B:=A^*A$ as in \cite{mjb18}:
\begin{eqnarray*}
\Gamma_{B^{(\ell+1)}}&=&\Gamma_{A^{(\ell+1)}}^T\Gamma_{A^{(\ell+1)}}= \Gamma_{G^{(\ell)}}^T\Gamma_{A^{(\ell)}}^T\Gamma_{A^{(\ell)}}\Gamma_{G^{(\ell)}}\\
&=& \Gamma_{G^{(\ell)}}^T\Gamma_{(A^{(\ell)})^*A^{(\ell)}}\Gamma_{G^{(\ell)}} =
\Gamma_{G^{(\ell)}}^T\Gamma_{B^{(\ell)})}\Gamma_{G^{(\ell)}},\quad \ell=0,1,2,\ldots,
\end{eqnarray*}
where  $B^{(0)}=A^*A$.
Let
\[
A^{(\ell)}:=[\ba_1^{(\ell)},\ldots, \ba_p^{(\ell)},\ldots,\ba_q^{(\ell)},\ldots,\ba_n^{(\ell)}].
\]
Then
\begin{eqnarray}\label{def:bl}
B^{(\ell)} &:=& (A^{(\ell)})^*A^{(\ell)}\nonumber \\
&=&  \left[
\begin{array}{ccccccc}
(\ba_1^{(\ell)})^*\ba_1^{(\ell)} & \cdots & (\ba_1^{(\ell)})^*\ba_p^{(\ell)} & \cdots &(\ba_1^{(\ell)})^*\ba_q^{(\ell)} &\cdots & (\ba_1^{(\ell)})^*\ba_n^{(\ell)}  \\
\vdots & \ddots & \vdots & \ddots &\vdots &\ddots & \vdots  \\
(\ba_p^{(\ell)})^*\ba_1^{(\ell)} & \cdots & (\ba_p^{(\ell)})^*\ba_p^{(\ell)} & \cdots &(\ba_p^{(\ell)})^*\ba_q^{(\ell)} &\cdots & (\ba_p^{(\ell)})^*\ba_n^{(\ell)}  \\
\vdots & \ddots & \vdots & \ddots &\vdots &\ddots & \vdots  \\
(\ba_q^{(\ell)})^*\ba_1^{(\ell)} & \cdots & (\ba_q^{(\ell)})^*\ba_p^{(\ell)} & \cdots &(\ba_q^{(\ell)})^*\ba_q^{(\ell)} &\cdots & (\ba_q^{(\ell)})^*\ba_n^{(\ell)}  \\
\vdots & \ddots & \vdots & \ddots &\vdots &\ddots & \vdots  \\
(\ba_n^{(\ell)})^*\ba_1^{(\ell)} & \cdots & (\ba_n^{(\ell)})^*\ba_p^{(\ell)} & \cdots &(\ba_n^{(\ell)})^*\ba_q^{(\ell)} &\cdots & (\ba_n^{(\ell)})^*\ba_n^{(\ell)}
\end{array}
\right].
\end{eqnarray}
From Remark \ref{rem:offa} we have
\begin{eqnarray*}
&&\off(B^{(\ell)})^2 = \|B^{(\ell)}\|_{F}^{2}- \sum_{u=1}^{n}{({b}^{(\ell)}_{uu}})^2 \nonumber \\
&=&  \|(A^{(\ell)})^*A^{(\ell)}\|_{F}^{2}- \sum_{u=1}^{n}{({b}^{(\ell)}_{uu}})^2 \nonumber \\
&=&\|{(G^{(\ell-1)})}^*B^{(\ell-1)}G^{(\ell-1)}\|_{F}^{2}
-\Big(\sum_{u=1}^{n}({b^{{(\ell-1)}}_{uu}})^2-(b_{pp}^{(\ell-1)})^2-(b_{qq}^{(\ell-1)})^{2}+(b_{pp}^{(\ell)})^2+(b_{qq}^{(\ell)})^{2}\Big)\nonumber\\
&=& \|B^{(\ell-1)}\|_{F}^{2} -\sum_{u=1}^{n}({b^{{(\ell-1)}}_{uu}})^2+\big((b_{pp}^{(\ell-1)})^2+(b_{qq}^{(\ell-1)})^{2}-(b_{pp}^{(\ell)})^2-(b_{qq}^{(\ell)})^{2}\big) \nonumber\\
&=&\texttt{off}(B^{(\ell-1)})^2-2|b_{pq}^{(\ell-1)}|^{2}.
\end{eqnarray*}
We see that $\|\Gamma_{B^{(\ell)}}\|_F^2=4\|B^{(\ell)}\|_F^2$. Hence, $\texttt{off}(\Gamma_{B^{(\ell)}})^2=4 \texttt{off}({B^{(\ell)}})^2$.

We have the following result on the linear convergence of Algorithm \ref{alg:3}. The proof follows from \cite[Theorem 3.3]{mjb18} and thus we omit it here.
\begin{theorem}\label{thm:lc}
Let $\{\sigma_w\}_{w=1}^n$ be the $n$ singular values of $A$ and $\Gamma_{A^{(\ell)}}$ be the matrix after $\ell$ orthogonal JRS-symplectic Jacobi updates generated by Algorithm \ref{alg:3}. Then there exists a permutation $\{\varphi_1,\varphi_2,\ldots,\varphi_n\}$ of $\{1,2,\ldots,n\}$ such that
\[
\lim_{\ell\to\infty}\Gamma_{A^{(\ell)}}^T\Gamma_{A^{(\ell)}}=\diag(\sigma_{\varphi_1}^2,\ldots,\sigma_{\varphi_n}^2,\sigma_{\varphi_1}^2,\ldots,\sigma_{\varphi_n}^2,
\sigma_{\varphi_1}^2,\ldots,\sigma_{\varphi_n}^2,\sigma_{\varphi_1}^2,\ldots,\sigma_{\varphi_n}^2).
\]
Moreover,
\[
\off(\Gamma^T_{A^{(\ell)}}\Gamma_{A^{(\ell)}})\le \Big(1-\frac{1}{N}\Big)^\ell \off(\Gamma^T_{A^{(0)}}\Gamma_{A^{(0)}}),\quad N:=\frac{1}{2}n(n-1).
\]
\end{theorem}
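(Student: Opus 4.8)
The plan is to recast the statement in terms of the $n\times n$ quaternion Hermitian matrix $B:=A^*A$ and then to run the classical convergence argument for the one-sided Jacobi iteration. By parts (1) and (2) of Lemma \ref{lem21}, $\Gamma_{A^{(\ell)}}^T\Gamma_{A^{(\ell)}}=\Gamma_{(A^{(\ell)})^*A^{(\ell)}}=\Gamma_{B^{(\ell)}}$, so the matrix appearing in the theorem is exactly the real counterpart of the iterate $B^{(\ell+1)}=(G^{(\ell)})^*B^{(\ell)}G^{(\ell)}$ (equivalently $\Gamma_{B^{(\ell+1)}}=\Gamma_{G^{(\ell)}}^T\Gamma_{B^{(\ell)}}\Gamma_{G^{(\ell)}}$) already displayed before the theorem. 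Since every $G^{(\ell)}$ is unitary, all $B^{(\ell)}$ are unitarily similar to $B^{(0)}=A^*A$, whose eigenvalues are, by Lemma \ref{lem22}, exactly $\sigma_1^2,\dots,\sigma_n^2$. Because the real counterpart of a real diagonal matrix $\Lambda$ is $\Lambda\oplus\Lambda\oplus\Lambda\oplus\Lambda$, it therefore suffices to establish (i) the asserted geometric decay of $\off(\Gamma_{B^{(\ell)}})$ and (ii) the convergence $B^{(\ell)}\to\diag(\sigma_{\varphi_1}^2,\dots,\sigma_{\varphi_n}^2)$ for some permutation $\varphi$.

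For (i) I would start from the reduction identity already obtained, $\off(B^{(\ell)})^2=\off(B^{(\ell-1)})^2-2|b^{(\ell-1)}_{pq}|^2$, together with $\off(\Gamma_{B^{(\ell)}})^2=4\,\off(B^{(\ell)})^2$. The pivot rule of Algorithm \ref{alg:3} selects $(p,q)$ with $|b^{(\ell-1)}_{pq}|=\max_{u\neq v}|b^{(\ell-1)}_{uv}|$, and since $B^{(\ell-1)}$ is Hermitian, $\off(B^{(\ell-1)})^2=2\sum_{u<v}|b^{(\ell-1)}_{uv}|^2\le 2N|b^{(\ell-1)}_{pq}|^2$ with $N=\tfrac12 n(n-1)$; hence $2|b^{(\ell-1)}_{pq}|^2\ge\off(B^{(\ell-1)})^2/N$, so $\off(B^{(\ell)})^2\le(1-1/N)\off(B^{(\ell-1)})^2$. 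Iterating from $\ell=0$ gives the claimed geometric decay of $\off(\Gamma_{A^{(\ell)}}^T\Gamma_{A^{(\ell)}})$, and in particular $\off(\Gamma_{B^{(\ell)}})\to0$.

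For (ii) I would argue that the iterates themselves form a Cauchy sequence. Each $G^{(\ell)}$ differs from the identity only in its $2\times2$ $(p,q)$ block, so a short computation gives $\|B^{(\ell+1)}-B^{(\ell)}\|_F\le C\,\off(B^{(\ell)})$ for an absolute constant $C$: off the pivot rows and columns nothing changes, the $(p,q)$ and $(q,p)$ entries are annihilated, the remaining entries of rows and columns $p,q$ become linear combinations with bounded coefficients of entries of $B^{(\ell)}$ lying in those rows and columns, and — the one point needing care, since the rotation angle can reach $\pi/4$ when $b^{(\ell)}_{pp}\approx b^{(\ell)}_{qq}$ — identities such as $\widetilde b_{pp}=b_{pp}-t|b_{pq}|$ and $\widetilde b_{qq}=b_{qq}+t|b_{pq}|$ keep the diagonal change $O(|b^{(\ell)}_{pq}|)=O(\off(B^{(\ell)}))$ as well. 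Combined with the geometric decay from (i), this makes $\sum_{\ell}\|B^{(\ell+1)}-B^{(\ell)}\|_F$ finite, so $(B^{(\ell)})$ converges to some $B^{(\infty)}$; since $\off(B^{(\ell)})\to0$ the limit is real diagonal, and being a limit of matrices unitarily similar to $A^*A$ its diagonal is a permutation of $\{\sigma_w^2\}_{w=1}^n$. Passing to real counterparts yields the stated limit. I expect the uniform $O(\off(B^{(\ell)}))$ bound on the step size — in particular the near-degenerate pivot case, where the rotation angle is not small — to be the main point requiring care; everything else is bookkeeping, and the whole argument runs parallel to \cite[Theorem 3.3]{mjb18}.
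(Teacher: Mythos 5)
The paper itself contains no proof of this theorem: it is stated with the remark that the proof "follows from [Theorem 3.3 of the Ma--Jia--Bai paper]" and is omitted. So there is nothing internal to compare against line by line; your proposal supplies the missing argument, and it is the standard classical-Jacobi convergence argument, which is surely also what the cited reference runs. Both halves of your sketch are sound: part (i) uses the exact reduction $\off(B^{(\ell)})^2=\off(B^{(\ell-1)})^2-2|b^{(\ell-1)}_{pq}|^2$ together with the greedy-pivot bound $2|b^{(\ell-1)}_{pq}|^2\ge \off(B^{(\ell-1)})^2/N$, and part (ii) correctly observes that $\off\to 0$ alone does not give convergence of the iterates, then gets it from summability of the steps via $\|B^{(\ell+1)}-B^{(\ell)}\|_F\le C\,\off(B^{(\ell)})$ (the identities $\widetilde b_{pp}=b_{pp}-t|b_{pq}|$, $\widetilde b_{qq}=b_{qq}+t|b_{pq}|$ with $|t|\le 1$ do hold for the rotation (\ref{def:g}), so the diagonal movement is controlled), after which the diagonal limit is identified with a permutation of $\{\sigma_w^2\}$ by continuity of eigenvalues and the statement is transported to $\Gamma_{B}=B\oplus B\oplus B\oplus B$ for real diagonal $B$. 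This is a more careful treatment of the "there exists a permutation such that the limit exists" clause than most textbook accounts.

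One concrete caveat: what your recursion proves is $\off(\Gamma^T_{A^{(\ell)}}\Gamma_{A^{(\ell)}})^2\le (1-1/N)^{\ell}\,\off(\Gamma^T_{A^{(0)}}\Gamma_{A^{(0)}})^2$, i.e.\ the unsquared quantity decays like $(1-1/N)^{\ell/2}$, not like $(1-1/N)^{\ell}$ as the displayed inequality literally asserts. The verbatim bound cannot be obtained from the greedy-pivot estimate, and in fact fails at the first step for a Hermitian $B^{(0)}=A^*A$ whose off-diagonal entries all have equal modulus: there $\off(B^{(1)})=\sqrt{1-1/N}\,\off(B^{(0)})>(1-1/N)\off(B^{(0)})$ for $N>1$. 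So the theorem should be read (as in Golub--Van Loan and, presumably, the cited reference) as a statement about $\off(\cdot)^2$; in your write-up, state explicitly that you establish the squared version rather than asserting that iterating "gives the claimed geometric decay" verbatim. With that adjustment the proposal is complete.
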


However, in  Algorithm \ref{alg:3}, the search for the optimal columns $p$ and $q$ needs $O(n^2)$. To reduce the cost, one may adopt the scheme of  cyclic-by-column as the cyclic Jacobi algorithm \cite[Algorithm 8.4.3]{gl89}. In the following procedure, we provide a structure-preserving one-sided cyclic Jacobi algorithm for orthogonalizing  the columns of an $m\times n$ quaternion matrix $A$.
\begin{algorithm}\label{alg:2}
{\rm Given an $m\times n$ quaternion matrix $A=A_0+A_1i+A_2j+A_3k\in\mathbb{H}^{m\times n}$ and a tolerance $\tol>0$, this algorithm overlaps the real counterpart $\Gamma_A$ by $\Gamma_A\widetilde{V}$, where $\widetilde{V}$ is  orthogonal and $\off(\widetilde{V}^T \Gamma_A^T\Gamma_A \widetilde{V})\le\tol\cdot \|\Gamma_A^T\Gamma_A\|_F$.}

\begin{flushleft}
$\widetilde{V}=\Gamma_{I_n}$, $\zeta=\tol\cdot\|\Gamma_A^T\Gamma_A\|_F$ \\
{\bf while}~ $\off(\Gamma_A^T\Gamma_A)>\zeta$\\
~~~{\bf for} $p=1:n-1$\\
~~~~~~~~{\bf for} $q=p+1:n$\\
~~~~~~~~~~~~~$\ba_{p0}=A_0(:,p)$, $\ba_{p1}=A_1(:,p)$, $\ba_{p2}=A_2(:,p)$, $\ba_{p3}=A_3(:,p)$\\
~~~~~~~~~~~~~$\ba_{q0}=A_0(:,q)$, $\ba_{q1}=A_1(:,q)$, $\ba_{q2}=A_2(:,q)$, $\ba_{q3}=A_3(:,q)$\\
~~~~~~~~~~~~~$(c_r,s_0,s_1,s_2,s_3)={\bf GJSJR}(\ba_{p0},\ba_{p1},\ba_{p2},\ba_{p3},\ba_{q0},\ba_{q1},\ba_{q2},\ba_{q3})$\\
~~~~~~~~~~~~~$\Gamma_A=\Gamma_A\Gamma_{G(p,q,\theta)}$\\
~~~~~~~~~~~~~$\widetilde{V} = \widetilde{V}\Gamma_{G(p,q,\theta)}$\\
~~~~~~~~{\bf end}\\
~~~{\bf end}\\
{\bf end}
\end{flushleft}
\end{algorithm}

\begin{remark}
In Algorithm \ref{alg:2}, we only need to store the first $m$ rows of $\Gamma_A$, which reduce the total storage. The later numerical tests show that Algorithm \ref{alg:2} works much better than the implicit Jacobi algorithm in {\rm \cite{bisa07}}.
\end{remark}

We now give the quadratic convergence analysis of Algorithm \ref{alg:2}.  Algorithm \ref{alg:2} gives the following iterative scheme:
\[
\Gamma_{A^{(\ell+1)}}=\Gamma_{A^{(\ell)}}\Gamma_{G^{(\ell)}},\quad \ell=0,1,2,\ldots,
\]
where $A^{(0)}=A$ and $G^{(\ell)}\in\mathbb{H}^{n\times n}$ is a unitary quaternion matrix defined in (\ref{def:rj}) with the cosine-sine group $(c_r,s_0,s_1,s_2,s_3)$ being generated by Algorithm \ref{alg:1}. In fact, Algorithm \ref{alg:2} can be seen as a  structure-preserving cyclic Jacobi algorithm for $\Gamma_A^T\Gamma_A$.

We have the following theorem on the quadratic convergence of Algorithm \ref{alg:2}. The proof can be seen as a generalization of \cite{vk66,w62}.
\begin{theorem}\label{thm:qc}
Let $\{\sigma_w\}_{w=1}^n$ be the $n$ singular values of $A$ and $\Gamma_{A^{(\ell)}}$ be the matrix after $\ell$ orthogonal JRS-symplectic Jacobi updates generated by Algorithm \ref{alg:2}.  If $\off(\Gamma^T_{A^{(d)}}\Gamma_{A^{(d)}})<\delta/2$ for some $d\ge 1$ where $ 0<2\delta\le\min_{\sigma_u\neq \sigma_v}|\sigma_u^2-\sigma_v^2|$,  then
\[
\off(\Gamma^T_{A^{(d+N)}}\Gamma_{A^{(d+N)}})\le \sqrt{\frac{25}{72}}\cdot \frac{\off(\Gamma^T_{A^{(d)}}\Gamma_{A^{(d)}})^2}{\delta}.
\]
\end{theorem}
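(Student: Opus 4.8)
The plan is to transfer the statement to the quaternion Hermitian eigenvalue picture and then run the classical Wilkinson--van Kempen argument there. Put $B^{(\ell)}=(A^{(\ell)})^*A^{(\ell)}$; by Remark~\ref{rem:offa} together with $\|\Gamma_A^T\Gamma_A\|_F^2=4\|A^*A\|_F^2$ one has $\off(\Gamma_{A^{(\ell)}}^T\Gamma_{A^{(\ell)}})=2\,\off(B^{(\ell)})$, while Algorithm~\ref{alg:2} acts on $B^{(\ell)}$ by the cyclic-by-column sequence of quaternion Jacobi rotations that annihilate the current pivot $(p,q)$ and $(q,p)$ entries, the eigenvalues of every $B^{(\ell)}$ being $\sigma_1^2,\dots,\sigma_n^2$. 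Hence it suffices to show that $\off(B^{(d)})<\delta/4$ implies $\off(B^{(d+N)})\le \tfrac{5}{3\sqrt2}\,\off(B^{(d)})^2/\delta$. Two facts are used throughout: $\off(B^{(\ell)})$ is non-increasing along the iteration (it was shown above that $\off(B^{(\ell+1)})^2=\off(B^{(\ell)})^2-2|b_{pq}^{(\ell)}|^2$), so every off-diagonal entry occurring in the sweep $\ell=d,\dots,d+N$ has modulus at most $\varepsilon:=\off(B^{(d)})$; and for the pivot pair of a single rotation the parameter obeys $|s|\le |b_{pq}|/|b_{pp}-b_{qq}|$ whenever $b_{pp}\ne b_{qq}$, which is exactly what the closed forms in~(\ref{def:g}) give.

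First I would prove a \emph{diagonal-separation lemma}: using the assignment of the $\sigma_w^2$ to diagonal positions coming from Gershgorin (cf.\ the permutation in Theorem~\ref{thm:lc}), one has $|b_{uu}^{(\ell)}-\sigma_{\varphi(u)}^2|<\delta/4$ for every $u$ and every $\ell$ in the sweep, because each rotation perturbs only two diagonal entries and by at most a multiple of the annihilated off-diagonal entry, hence by a multiple of $\varepsilon<\delta/4$. Consequently, whenever $\sigma_{\varphi(u)}^2\ne\sigma_{\varphi(v)}^2$ we get $|b_{uu}^{(\ell)}-b_{vv}^{(\ell)}|\ge 2\delta-\delta/2>\delta$ uniformly over the sweep; this is the uniform denominator that upgrades the bound above to $|s|\le\varepsilon/\delta$ for every rotation whose two pivot indices lie in different eigenvalue clusters.

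The core is the \emph{fill-in estimate}. Fix a position $(p,q)$ with $\sigma_{\varphi(p)}^2\ne\sigma_{\varphi(q)}^2$, let the sweep annihilate it at sub-step $k_0$ so that $b_{pq}=0$ afterwards. For each later sub-step $k>k_0$ the $(p,q)$ entry is altered only if that rotation touches row or column $p$ or $q$; writing out $G^*BG$ for a rotation in the plane $(p,r)$ (resp.\ $(r,q)$) shows that the change in $b_{pq}$ is bounded by $|s_k|$ times the modulus of an off-diagonal entry of the current matrix plus a term of order $\varepsilon^3/\delta^2$, i.e.\ by $(\varepsilon/\delta)\,|b_{rq}^{(\,\cdot\,)}|$ (resp.\ $(\varepsilon/\delta)\,|b_{pr}^{(\,\cdot\,)}|$) up to higher order. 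Telescoping over these sub-steps and discarding higher-order contributions yields
\[
|b_{pq}^{(d+N)}|\le\frac1\delta\sum_{r\ne p,q}\big(|b_{pr}^{(\,\cdot\,)}|\,|b_{rq}^{(\,\cdot\,)}|\big),
\]
where each factor can be traced back (because once annihilated an entry only regrows at higher order) to the corresponding off-diagonal entry of $B^{(d)}$. Squaring, summing over the $N=\tfrac12 n(n-1)$ pairs $(p,q)$, and applying Cauchy--Schwarz so that the inner sums $\sum_r|b_{pr}|^2$ and $\sum_r|b_{rq}|^2$ collapse into row off-norms — whence $\sum_p\rho_p^2=\off(B^{(d)})^2$ — rather than generating a factor of $n$, gives $\off(B^{(d+N)})^2\le (c/\delta^2)\,\off(B^{(d)})^4$ with $c$ an absolute constant; tracking constants as in~\cite{vk66,w62} pins $c=25/18$, and passing back through $\off(\Gamma^T\Gamma)=2\off(B)$ produces the stated factor $\sqrt{25/72}$.

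I expect two points to be the real obstacles. The first is keeping the Cauchy--Schwarz/combinatorial bookkeeping in the last step dimension-independent: a crude ``(per-rotation bound)$\times$(number of rotations)'' estimate injects a power of $n$, and one must use that in the cyclic order each entry of the sweep-initial matrix feeds only boundedly many fill-in sums, exactly as in Wilkinson's count, so that the double sum closes up as $(\sum_p\rho_p^2)^2=\off(B^{(d)})^4$. The second, more delicate, is the case of repeated singular values: when $\sigma_{\varphi(p)}^2=\sigma_{\varphi(q)}^2$ the pivot angle is not controlled by $\delta$ (only by $|\theta|\le\pi/4$), so the within-cluster off-diagonal entries must be handled by a separate van~Kempen-type argument showing that after a full sweep they too are $O(\varepsilon^2/\delta)$, driven by the entries linking the cluster to the rest of the spectrum. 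Assembling the inter-cluster and intra-cluster contributions while retaining the sharpest constants is what fixes the bound at $\sqrt{25/72}$.
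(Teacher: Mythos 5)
Your reduction to the Hermitian picture ($B^{(\ell)}=(A^{(\ell)})^*A^{(\ell)}$, $\off(\Gamma_{A^{(\ell)}}^T\Gamma_{A^{(\ell)}})=2\off(B^{(\ell)})$), the uniform gap estimate on the diagonal, the angle bound $|\sin\theta_\ell|\le|b_{pq}^{(\ell-1)}|/\delta$, and the Wilkinson-style row-by-row fill-in telescoping are all essentially the route the paper takes for the entries whose two indices belong to different eigenvalue clusters. But there is a genuine gap at exactly the point you flag as ``more delicate'': the intra-cluster entries in the case of repeated singular values. For a pivot pair $(p,q)$ with $\sigma_{\varphi(p)}^2=\sigma_{\varphi(q)}^2$ the rotation angle is only bounded by $\pi/4$, so the mechanism you rely on throughout --- $|s|\le\varepsilon/\delta$ feeding a product/Cauchy--Schwarz fill-in bound --- simply does not apply to those rotations, and your proposal defers their treatment to an unspecified ``van Kempen-type argument showing that after a full sweep they too are $O(\varepsilon^2/\delta)$.'' That deferred step is the core of the multiple-eigenvalue case, and the paper closes it not by a sweep argument at all but by Wilkinson's static algebraic identity \cite{w65}: after permuting a cluster into the leading block, the fact that $\sigma_{\varphi_1}^2$ is an eigenvalue of $\widehat{S}^{(\ell)}$ of multiplicity $4n_1$ while $\widehat{S}_{22}^{(\ell)}-\sigma_{\varphi_1}^2I$ is nonsingular forces the Schur complement to vanish, i.e. $\widehat{S}_{11}^{(\ell)}-\sigma_{\varphi_1}^2I_{4n_1}=\widehat{S}_{12}^{(\ell)}(\widehat{S}_{22}^{(\ell)}-\sigma_{\varphi_1}^2I)^{-1}\widehat{S}_{21}^{(\ell)}$ as in (\ref{eq:s1sig}), whence the intra-cluster off-diagonal mass is automatically of second order at every iterate, bounded by $\|\widehat{S}_{12}^{(\ell)}\|_F^2$ over the gap $3\delta/2$. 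Without this (or an equivalent substitute) your argument does not close for repeated singular values.

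A consequence is that your constant is asserted rather than derived: the paper's $25/72$ arises precisely as the sum $2/9+1/8$ of the intra-cluster Schur-complement contribution and the inter-cluster fill-in contribution $4\off(B^{(d)})^2\widehat{\sum}\sin^2\theta_\ell\le\off(S^{(d)})^4/(8\delta^2)$ (where $\widehat{\sum}$ deliberately excludes the within-cluster rotations), so ``tracking constants as in \cite{vk66,w62} pins $c=25/18$'' cannot be carried out from your inter-cluster estimate alone. A minor further point: your justification of the diagonal localization ($|b_{uu}^{(\ell)}-\sigma_{\varphi(u)}^2|<\delta/4$ throughout the sweep) by accumulating per-rotation perturbations is loose; the clean argument, as in the paper, is to apply the Wielandt--Hoffman theorem to $B^{(\ell)}$ (or $S^{(\ell)}$) at each iterate together with the monotonicity of $\off$, which gives the bound uniformly and keeps the cluster assignment fixed.
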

\begin{proof}
Write  $S^{(d)}:=\Gamma_{A^{(d)}}^T\Gamma_{A^{(d)}}=D^{(d)}+E^{(d)}+(E^{(d)})^T$, where $D^{(d)}$ and $E^{(d)}$ are diagonal and strictly upper triangular, respectively. By using Theorem \ref{thm:lc} and the Wielandt-Hoffman theorem (\cite[Theorem 8.1.4]{gl89}) we have
\begin{equation}
|s_{ww}^{(d)}-\sigma_{\varphi_w}^2|\leq \| D^{(d)}-S^{(d)}\|_F< \frac{\delta}{2},\quad 1\le w\le n.
\end{equation}
Thus we have for two distinct eigenvalues $\sigma_{\varphi_u}^2$ and $\sigma_{\varphi_v}^2$,
\begin{eqnarray}\label{eq:diaggap}
|s_{uu}^{(d)}-s_{vv}^{(d)}|&=&|(s_{uu}^{(d)}-\sigma_{\varphi_u}^2)-(s_{vv}^{(d)}-\sigma_{\varphi_v}^2)+(\sigma_{\varphi_u}^2-\sigma_{\varphi_v}^2)| \nonumber\\
&\geq&|\sigma_{\varphi_u}^2-\sigma_{\varphi_v}^2|-|s_{uu}^{(d)}-\sigma_{\varphi_u}^2|-|s_{vv}^{(d)}-\sigma_{\varphi_v}^2| \nonumber\\
&>&2\delta-\frac{\delta}{2}-\frac{\delta}{2}= \delta.
\end{eqnarray}
Since $\off(\Gamma^T_{A^{(\ell)}}\Gamma_{A^{(\ell)}})$ is decreasing, we know that $\off(\Gamma^T_{A^{(\ell)}}\Gamma_{A^{(\ell)}})<\delta/2$ and (\ref{eq:diaggap})  hold for $\ell> d$.

We show the quadratic convergence of Algorithm \ref{alg:2}. We first consider the case of one  multiple singular value.
Assume without loss of generality that only $\sigma_{\varphi_1}$ is a multiple singular value of $A^{(\ell)}$ $(\ell>d)$ and the diagonal entries $s_{11}^{(\ell)}, s_{22}^{(\ell)}, \ldots, s_{n_1n_1}^{(\ell)}$ of $S^{(\ell)}:=\Gamma^T_{A^{(\ell)}}\Gamma_{A^{(\ell)}}$ converge to $\sigma_{\varphi_1}^2$. Then, by appropriate row and column interchanges, we get a permutation matrix $P$ such that
\[
\widehat{S}^{(\ell)}=P^TS^{(\ell)}P=\left[
\begin{array}{cc}
\widehat{S}_{11}^{(\ell)} & \widehat{S}_{12}^{(\ell)} \\
\widehat{S}_{21}^{(\ell)}  & \widehat{S}_{22}^{(\ell)}
\end{array}
\right],
\]
where $\widehat{S}_{11}^{(\ell)}\in\mathbb{R}^{4n_1\times 4n_1}$ with diagonal entries converging to  $\sigma_{\varphi_1}^2$.

We provide an upper bound for the quantity
\[
\Phi_1^{(\ell)}:=\sqrt{\sum_{1\le p\neq q\le 4n_1}(\hat{s}_{pq}^{(\ell)})^2}.
\]
As in \cite{w65}, it is easy to see that
\begin{eqnarray}
\nonumber T^{(\ell)}&=&\left[
\begin{array}{cc}
I_{4n_1} & -\widehat{S}_{12}^{(\ell)}(\widehat{S}_{22}^{(\ell)} -\sigma_{\varphi_1}^2I_{4n-4n_1})^{-1}\\
0  & I_{4n-4n_1}
\end{array}
\right]\left[
\begin{array}{cc}
\widehat{S}_{11}^{(\ell)}-\sigma_{\varphi_1}^2I_{4n_1} & \widehat{S}_{12}^{(\ell)} \\
\widehat{S}_{21}^{(\ell)}  & \widehat{S}_{22}^{(\ell)}- \sigma_{\varphi_1}^2I_{4n-4n_1}
\end{array}
\right]\\
\nonumber&=&\left[
\begin{array}{cc}
\widehat{S}_{11}^{(\ell)}-\sigma_{\varphi_1}^2I_{4n_1}-\widehat{S}_{12}^{(\ell)}(\widehat{S}_{22}^{(\ell)} -\sigma_{\varphi_1}^2I_{4n-4n_1})^{-1}\widehat{S}_{21}^{(\ell)} & 0 \\
\widehat{S}_{21}^{(\ell)}  & \widehat{S}_{22}^{(\ell)}- \sigma_{\varphi_1}^2I_{4n-4n_1}
\end{array}
\right]
\end{eqnarray}
and  the rank of $T^{(\ell)}$ is the same as $\widehat{S}^{(\ell)}-\sigma_{\varphi_1}^2I_{4n}$, which implies that
\BE\label{eq:s1sig}
\widehat{S}_{11}^{(\ell)}-\sigma_{\varphi_1}^2I_{4n_1}=\widehat{S}_{12}^{(\ell)}(\widehat{S}_{22}^{(\ell)} -\sigma_{\varphi_1}^2I_{4n-4n_1})^{-1}\widehat{S}_{21}^{(\ell)}.
\EE
Let $\widetilde{\sigma}_{\varphi_w}^2$ be  the eigenvalues  of $\widehat{S}_{22}^{(\ell)}$. Note that
\[
|\sigma_{\varphi_w}^2-\widetilde{\sigma}_{\varphi_w}^2|\leq \left\|\widehat{S}^{(\ell)}-
\left[
\begin{array}{cc}
\Sigma_{11}^{(\ell)} & 0 \\
0  & \widehat{S}_{22}^{(\ell)}
\end{array}
\right]\right\|_F \leq \off(\widehat{S}^{(\ell)})= \off(S^{(\ell)}) \leq \frac{\delta}{2},
\]
where $\Sigma_{11}^{(\ell)}=\diag( \hat{s}_{11}^{(\ell)},\hat{s}_{22}^{(\ell)}, \ldots, \hat{s}_{4n_1,4n_1}^{(\ell)})$ with $\hat{s}_{ww}^{(\ell)}$ being the $(w,w)$ entry of $\widehat{S}_{11}^{(\ell)}$.
Thus,
\[
|\sigma_{\varphi_1}^2-\widetilde{\sigma}_{\varphi_w}^2|\geq |\sigma_{\varphi_1}^2-\sigma_{\varphi_w}^2|-|\sigma_{\varphi_w}^2-\widetilde{\sigma}_{\varphi_w}^2|\geq 2\delta-\frac{\delta}{2}=\frac{3\delta}{2}.
\]
This, together with (\ref{eq:s1sig}), yields
\begin{subequations}\label{eq:5}
\begin{eqnarray}
(\Phi_1^{(\ell)})^2&=&\off(\widehat{S}_{11}^{(\ell)})^2\le\|\widehat{S}_{11}^{(\ell)}-\sigma_{\varphi_1}^2I_{4n_1}\|_F^2\leq\frac{\|\widehat{S}_{12}^{(\ell)}\|_F^4}{\min_{\widetilde{\sigma}_{\varphi_w}^2\neq \sigma_{\varphi_1}^2}|\sigma_{\varphi_1}^2-\widetilde{\sigma}_{\varphi_w}^2|^2} \nonumber\\
&\leq&  \frac{2\off(S^{(\ell)})^2}{9\delta^2}\|\widehat{S}_{12}^{(\ell)}\|_F^2
\le  \frac{2\off(S^{(d)})^2}{9\delta^2}\|\widehat{S}_{12}^{(\ell)}\|_F^2 \\
&\leq& \frac{\off(S^{(d)})^4}{9\delta^2}
\end{eqnarray}
\end{subequations}
since  $\|\widehat{S}_{12}^{(\ell)}\|_F^2\le 1/2\; \off(\widehat{S}^{(\ell)})^2=1/2\; \off(S^{(\ell)})^2\le 1/2 \off(S^{(d)})^2$. The estimates in (\ref{eq:5}) is crucial  for proving the quadratic convergence of the structure-preserving one-sided cyclic Jacobi algorithm.

As in (\ref{def:g}), the rotation angle $\theta_\ell$ is chosen such that $|\theta_\ell|\leq\pi/4$. Using  (\ref{def:bl}), (\ref{eq:diaggap}) and $S^{(\ell)}=\Gamma_{B^{(\ell)}}$ we have
\begin{equation}\label{eq:2.1}
|\sin\theta_\ell|  \leq  \frac{1}{2}|\tan 2\theta_\ell|=\frac{|b_{pq}^{(\ell-1)}|}{|b_{qq}^{(\ell-1)}-b_{pp}^{(\ell-1)}|} \leq \frac{|b_{pq}^{(\ell-1)}|}{\delta}.
\end{equation}
This, together with $\frac{1}{2}\off(B^{(\ell-1)})^2-\frac{1}{2}\off(B^{(\ell)})^2=|b_{pq}^{(\ell-1)}|^2$, yields
\[
\off (B^{(d)})^2-2\sum_{\ell=d+1}^{d+N}|b_{pq}^{(\ell-1)}|^2=\off (B^{(d+N)})^2\geq 0.
\]
Using  (\ref{eq:2.1}) we have
\begin{equation}\label{eq:6}
\widehat{\sum}\sin^2\theta_\ell\leq \frac{\sum |b_{pq}^{(\ell-1)}|^2}{\delta^2}\leq \frac{\off (B^{(d)})^2}{2\delta^2}.
\end{equation}
where $\widehat{\sum}$ means that we include in the sum only rotations of entries outside the first $n_1$ rows and the first $n_1$ columns of $B^{(\ell)}$.

We now show the quadratic convergence of Algorithm \ref{alg:2} for the case of one  multiple singular value.
As in \cite{w62}, for example, we take an $m\times 5$ quaternion matrix $A$. In this case, $B^{(0)}=A^{*}A\in \mathbb{H}^{5\times 5}$. In the following, we show the effect of annihilating the entries in the first row and column of $B^{(d)}$. Since we are only interested in the off diagonal entries, which is updated when these entries are affected by the current rotations, the diagonal entries are all denoted by $``\times"$.

\begin{eqnarray}\label{eq:5m}
B^{(d)} &:=& \left[
\begin{array}{ccccc}
\times & b_{12}^{(d)} & b_{13}^{(d)} & b_{14}^{(d)} & b_{15}^{(d)} \\
b_{21}^{(d)} & \times & b_{23}^{(d)} & b_{24}^{(d)} & b_{25}^{(d)} \\
b_{31}^{(d)} & b_{32}^{(d)} & \times & b_{34}^{(d)} & b_{35}^{(d)} \\
b_{41}^{(d)} & b_{42}^{(d)} & b_{43}^{(d)} & \times & b_{45}^{(d)}\\
b_{51}^{(d)} & b_{52}^{(d)} & b_{53}^{(d)} & b_{54}^{(d)} & \times\\
\end{array}
\right]
\xrightarrow[]{G(1,2; \theta_d)}
\left[
\begin{array}{ccccc}
\times & 0 & b_{13}^{(d+1)} & b_{14}^{(d+1)} & b_{15}^{(d+1)} \\
0 & \times & b_{23}^{(d+1)} & b_{24}^{(d+1)} & b_{25}^{(d+1)} \\
b_{31}^{(d+1)} & b_{32}^{(d+1)} & \times & b_{34}^{(d)} & b_{35}^{(d)} \\
b_{41}^{(d+1)} & b_{42}^{(d+1)} & b_{43}^{(d)} & \times & b_{45}^{(d)} \\
b_{51}^{(d+1)} & b_{52}^{(d+1)} & b_{53}^{(d)} & b_{54}^{(d)} & \times \\
\end{array}
\right] \nonumber\\
&& \xrightarrow[]{G(1,3; \theta_{d+1})}
\left[
\begin{array}{ccccc}
\times & b_{12}^{(d+2)} & 0 & b_{14}^{(d+2)} & b_{15}^{(d+2)} \\
b_{21}^{(d+2)} & \times & b_{23}^{(d+2)} & b_{24}^{(d+1)} & b_{25}^{(d+1)} \\
0   & b_{32}^{(d+2)} & \times & b_{34}^{(d+2)} & b_{35}^{(d+2)} \\
b_{41}^{(d+2)} & b_{42}^{(d+1)} & b_{43}^{(d+2)} & \times & b_{45}^{(d)} \\
b_{51}^{(d+2)} & b_{52}^{(d+1)} & b_{53}^{(d+2)} & b_{54}^{(d)} & \times \\
\end{array}
\right]  \nonumber\\
&& \xrightarrow[]{G(1,4; \theta_{d+2})} \left[
\begin{array}{ccccc}
\times & b_{12}^{(d+3)} & b_{13}^{(d+3)} & 0 & b_{15}^{(d+3)} \\
b_{21}^{(d+3)} & \times & b_{23}^{(d+2)} & b_{24}^{(d+3)} & b_{25}^{(d+1)} \\
b_{31}^{(d+3)}  & b_{32}^{(d+2)} & \times & b_{34}^{(d+3)} & b_{35}^{(d+2)} \\
0 & b_{42}^{(d+3)} & b_{43}^{(d+3)} & \times & b_{45}^{(d+3)} \\
b_{51}^{(d+3)} & b_{52}^{(d+1)} & b_{53}^{(d+2)} & b_{54}^{(d+3)} & \times \\
\end{array}
\right] \nonumber\\
&&\xrightarrow[]{G(1,5; \theta_{d+3})} \left[
\begin{array}{ccccc}
\times & b_{12}^{(d+4)} & b_{13}^{(d+4)} & b_{14}^{(d+4)} & 0 \\
b_{21}^{(d+4)} & \times & b_{23}^{(d+2)} & b_{24}^{(d+3)} & b_{25}^{(d+4)} \\
b_{31}^{(d+4)}  & b_{32}^{(d+2)} & \times & b_{34}^{(d+3)} & b_{35}^{(d+4)} \\
b_{41}^{(d+4)} & b_{42}^{(d+3)} & b_{43}^{(d+3)} & \times & b_{45}^{(d+4)} \\
0 & b_{52}^{(d+4)} & b_{53}^{(d+4)} & b_{54}^{(d+4)} & \times \\
\end{array}
\right].
\end{eqnarray}
For the entries of the first row of $B^{(d+4)}$, we have the following inequalities
\BE\label{eq:sum1row}
\left\{
\begin{array}{l}
|b_{14}^{(d+4)}|\leq |b_{54}^{(d+3)}||\sin \theta_{d+3}|,\\[2mm]
|b_{13}^{(d+4)}|\leq |b_{43}^{(d+2)}||\sin\theta_{d+2}|+|b_{53}^{(d+2)}||\sin\theta_{d+3}|,\\[2mm]
|b_{12}^{(d+4)}|\leq |b_{32}^{(d+1)}||\sin\theta_{d+1}|+|b_{42}^{(d+1)} ||\sin\theta_{d+2}|+|b_{52}^{(d+1)}||\sin\theta_{d+3}|.
\end{array}
\right.
\EE
Thus,
\begin{eqnarray*}
&& |b_{12}^{(d+4)}|^2+ |b_{13}^{(d+4)}|^2+ |b_{14}^{(d+4)}|^2 \\
&\leq & (|b_{32}^{(d+1)}|^2+|b_{42}^{(d+1)}|^2+|b_{52}^{(d+1)}|^2)(\sin^2\theta_{d+1}+\sin^2\theta_{d+2}+\sin^2\theta_{d+3})\\
&&+(|b_{43}^{(d+2)}|^2+|b_{53}^{(d+2)}|^2)(\sin^2\theta_{d+2}+\sin^2\theta_{d+3})+|b_{54}^{(d+3)}|^2\sin^2\theta_{d+3}\\
&\leq &(|b_{32}^{(d+1)}|^2+|b_{42}^{(d+1)}|^2+|b_{52}^{(d+1)}|^2+|b_{43}^{(d+2)}|^2+|b_{53}^{(d+2)}|^2+|b_{54}^{(d+3)}|^2)\\
&&\times (\sin^2\theta_{d+1}+\sin^2\theta_{d+2}+\sin^2\theta_{d+3}).
\end{eqnarray*}
Since each rotation affects only two entries in each of the related  columns or rows while the sum of the squares of their absolute values is kept unchanged we have
\begin{eqnarray*}
\nonumber |b_{15}^{(d+3)}|^2+|b_{25}^{(d+1)}|^2+|b_{35}^{(d+2)}|^2+|b_{45}^{(d+3)}|^2 &= & |b_{15}^{(d)}|^2+|b_{25}^{(d)}|^2+|b_{35}^{(d)}|^2+|b_{45}^{(d)}|^2, \\
\nonumber |b_{14}^{(d+2)}|^2+|b_{24}^{(d+1)}|^2+|b_{34}^{(d+2)}|^2 &=& |b_{14}^{(d)}|^2+|b_{24}^{(d)}|^2+|b_{34}^{(d)}|^2,\\
|b_{13}^{(d+1)}|^2+|b_{23}^{(d+1)}|^2 &= & |b_{13}^{(d)}|^2+|b_{23}^{(d)}|^2.
\end{eqnarray*}
Thus
\begin{eqnarray}\label{eq:1row}
&&  |b_{12}^{(d+4)}|^2+ |b_{13}^{(d+4)}|^2+ |b_{14}^{(d+4)}|^2 \nonumber \\
&\leq &(|b_{13}^{(d)}|^2+|b_{23}^{(d)}|^2+|b_{14}^{(d)}|^2+|b_{24}^{(d)}|^2+|b_{34}^{(d)}|^2+|b_{15}^{(d)}|^2+|b_{25}^{(d)}|^2 +|b_{35}^{(d)}|^2+|b_{45}^{(d)}|^2) \nonumber \\
&&\times(\sin^2\theta_{d+1}+\sin^2\theta_{d+2}+\sin^2\theta_{d+3}) \nonumber \\
 &\leq&\frac{1}{2} \off(B^{(d)})^2(\sin^2\theta_{d+1}+\sin^2\theta_{d+2}+\sin^2\theta_{d+3}).
\end{eqnarray}
Moreover, the sum of the squares of the absolute values of these entries  in  the first row remains unchanged in the subsequent rotations.

Similarly, we have after successively  annihilating the entries in  the second row
\begin{eqnarray}\label{eq:2row}
 |b_{23}^{(d+7)}|^2+|b_{24}^{(d+7)}|^2 &\le& \frac{1}{2}\off(B^{(d+4)})^2(\sin^2 \theta_{d+5}+\sin^2 \theta_{d+6}) \nonumber\\
 &\le& \frac{1}{2} \off(B^{(d)})^2(\sin^2 \theta_{d+5}+\sin^2 \theta_{d+6}).
\end{eqnarray}
Furthermore, for the third row we have
\begin{eqnarray}\label{eq:3row}
|b_{34}^{(d+9)}|^2 \leq \frac{1}{2}\off(B^{(d+7)})^2\sin^2 \theta_{d+8}
\leq \frac{1}{2} \off(B^{(d)})^2\sin^2 \theta_{d+8}.
\end{eqnarray}
Finally, the fourth row above the diagonal is annihilated. From (\ref{eq:1row}), (\ref{eq:2row}) and (\ref{eq:3row}) we obtain
\begin{eqnarray}\label{eq:allrow}
 \off(B^{(d+10)})^2 \le \off(B^{(d)})^2 \sum_{t=0}^{9}\sin^2\theta_{d+t}
\le  \off(B^{(d)})^2  \frac{\off(B^{(d)})^2}{2\delta^2}
= \frac{\off(B^{(d)})^4}{2\delta^2},
\end{eqnarray}
where the second inequality follows from (\ref{eq:6}).

Analogously to the proof of  (\ref{eq:allrow}), using the equality $\off(S^{(d)})^2=4\off(B^{(d)})^2$, (\ref{eq:5}b) and (\ref{eq:6}) we have
\begin{eqnarray*}
\off(S^{(d+N)})^2 &=& 4\off(B^{(d+N)})^2 \\
&=&  4\sum_{1\le p\neq q\le n_1}|b_{pq}^{(d+N)}|^2+8\sum_{p<q, q>n_1}|b_{pq}^{(d+N)}|^2\\
&\leq& \off(\widehat{S}_{11}^{(d+N)})^2+4\off(B^{(d)})^2\Big(\widehat{\sum} \sin^2\theta_\ell\Big)\\
&\leq& \off(\widehat{S}_{11}^{(d+N)})^2+4\off(B^{(d)})^2\frac{\off(B^{(d)})^2}{2\delta^2}\\
&\leq& \frac{\off(S^{(d)})^4}{9\delta^2}+\frac{\off (S^{(d)})^4}{8\delta^2}.
\end{eqnarray*}
This shows that  Algorithm \ref{alg:2} converges quadratically when there is only one multiple singular value.

Next, we show the quadratic convergence of Algorithm \ref{alg:2} for the case of more than one multiple singular values. If there exist $l$ multiple singular values, then we have
\begin{eqnarray}\label{eq:leigs}
\off(S^{(d+N)})^2 \le \frac{9+8l}{72\delta^2} \off(S^{(d)})^4.
\end{eqnarray}

In the following we decrease the factor $\frac{9+8l}{72}$. Assume that $\sigma_{\varphi_w}$ is a multiple singular value of $A^{(\ell)}$ with multiplicity $n_w$ for $w=1, \ldots, l$ and  $s_{11}^{(\ell)}, \ldots, s_{n_1n_1}^{(\ell)}$, $s_{n_1+1,n_1+1}^{(\ell)}, \ldots, s_{n_1+n_2,n_1+n_2}^{(\ell)}$, $\ldots, $ $s_{n_1+\cdots + n_{l-1}+1,n_1+\cdots + n_{l-1}+1}^{(\ell)}, \ldots, s_{n_1+ \cdots + n_{l},n_1+ \cdots + n_{l}}^{(\ell)}$ converge to $\sigma_{\varphi_1}^2$, $\sigma_{\varphi_2}^2$, $\ldots$ , $\sigma_{\varphi_l}^2$ accordingly. Then, by appropriate row and column interchanges, we get a permutation matrix $P$ such that
\[
\widehat{S}^{(\ell)}=P^TS^{(\ell)}P=\left[
\begin{array}{ccccc}
\widehat{S}_{11}^{(\ell)} & \widehat{S}_{12}^{(\ell)} & \cdots & \widehat{S}_{1l}^{(\ell)} & \widehat{S}_{1,l+1}^{(\ell)} \\
\widehat{S}_{21}^{(\ell)} & \widehat{S}_{22}^{(\ell)} &\cdots &\widehat{S}_{2l}^{(\ell)} & \widehat{S}_{2,l+1}^{(\ell)} \\
 \vdots &  \vdots & \ddots &  \vdots &  \vdots \\
\widehat{S}_{l1}^{(\ell)} & \widehat{S}_{l2}^{(\ell)} & \cdots &\widehat{S}_{ll}^{(\ell)} & \widehat{S}_{l,l+1}^{(\ell)}\\
\widehat{S}_{l+1,1}^{(\ell)} & \widehat{S}_{l+1,2}^{(\ell)} & \cdots &\widehat{S}_{l+1,l}^{(\ell)} &\widehat{S}_{l+1,l+1}^{(\ell)} \\
\end{array}
\right],
\]
where $\widehat{S}_{ww}^{(\ell)}\in\mathbb{R}^{4n_w\times 4n_w}$ with diagonal entries converging to  $\sigma_{\varphi_w}^2$ for $w=1, \ldots, l$.

Define the quantities
\[
\Phi_w^{(\ell)}:=\sqrt{\sum_{\sum_{u=0}^{w-1}4n_u+1\le p\neq q\le \sum_{u=0}^w4n_u}\left(\hat{s}_{p, q}^{(\ell)}\right)^2},\quad w=1,\ldots, l,
\]
where $n_0=0$.

Analogous to the proof of  (\ref{eq:5}a) we have
\begin{eqnarray*}
(\Phi_w^{(\ell)})^2
 \le \frac{2\off(S^{(\ell)})^2}{9\delta^2}\sum_{u=1,u\neq w}^{l+1}\|\widehat{S}_{wu}^{(\ell)}\|_F^2.
 \end{eqnarray*}
We note that
\[
\sum_{w=1}^l\Big(\sum_{u=1,u\neq w}^{l+1}\|\widehat{S}_{wu}^{(\ell)}\|_F^2\Big) \le\sum_{1\leq p\neq q\le 4n}(\hat{s}_{pq}^{(\ell)})^2=  \off(\widehat{S}^{(\ell)})^2=  \off(S^{(\ell)})^2.
\]
Hence,
\begin{eqnarray}
\nonumber \sum_{w=1}^l (\Phi_w^{(\ell)})^2 &\leq& \frac{2\off(S^{(\ell)})^2}{9\delta^2}\sum_{w=1}^l\Big(\sum_{u=1,u\neq w}^{l+1}\|\widehat{S}_{wu}^{(\ell)}\|_F^2\Big)\leq \frac{2\off(S^{(\ell)})^4}{9\delta^2} \leq  \frac{2\off(S^{(d)})^4}{9\delta^2}.
\end{eqnarray}
Therefore, (\ref{eq:leigs}) is reduced to
\begin{eqnarray}
\off(S^{(d+N)})^2 &\leq& \sum_{t=1}^l(\Phi_t^{(d+N)})^2+\off(B^{(d)})^2\Big(\widehat{\sum} \sin^2\theta_\ell\Big) \nonumber\\
&\leq& \frac{2\off(S^{(d)})^4}{9\delta^2}+\frac{\off (S^{(d)})^4}{8\delta^2}\nonumber\\
&\leq& \frac{25}{72}\cdot\frac{\off(S^{(d)})^4}{\delta^2}.
\end{eqnarray}
That is,
\[
\off(\Gamma^T_{A^{(d+N)}}\Gamma_{A^{(d+N)}})\le \sqrt{\frac{25}{72}}\cdot \frac{\off(\Gamma^T_{A^{(d)}}\Gamma_{A^{(d)}})^2}{\delta},
\]
which shows that Algorithm \ref{alg:2} is quadratically convergent when there exist more than one multiple singular values.

\end{proof}

Finally, we point out that, for an $m\times n$ quaternion matrix $A=A_0+A_1i+A_2j+A_3k\in\mathbb{H}^{m\times n}$, Algorithm \ref{alg:2} generates a matrix $\Gamma_{A^{(\Theta)}}$ after $\Theta$ orthogonal JRS-symplectic Jacobi updates such that
$A^{(\Theta)}=AV^{(\Theta)}$ has sufficiently orthogonal columns (which is measured by $\off(\Gamma_{A^{(\Theta)}}^T\Gamma_{A^{(\Theta)}})\le\tol\cdot \|\Gamma_A^T\Gamma_A\|_F$ for a prescribed tolerance $\tol>0$), where $V^{(\Theta)}=G^{(0)}G^{(1)}\cdots G^{(\Theta)}\in\mathbb{H}^{n\times n}$ is a unitary matrix. Then the SVD of $A$ follows from column scaling of $A^{(\Theta)}=AV^{(\Theta)}$, i.e.,
\BE\label{a:svd}
A^{(\Theta)}=AV^{(\Theta)}=U^{(\Theta)}\Sigma^{(\Theta)},
\EE
where $\Sigma^{(\Theta)}=\diag (\sigma_{\varphi_1}^{(\Theta)}, \sigma_{\varphi_2}^{(\Theta)}, \ldots, \sigma_{\varphi_n}^{(\Theta)})$ with $\sigma_{\varphi_w}^{(\Theta)}\ge 0$ for $w=1,\ldots,n$ and $U^{(\Theta)}\in\mathbb{H}^{m\times n}$ is such that $(U^{(\Theta)})^*U^{(\Theta)}=I_n$.

\section{Numerical Experiments}\label{sec4}
In this section, we present some numerical experiments to illustrate the effectiveness of Algorithm \ref{alg:2} for computing the SVD of a rectangle quaternion matrix and compare it with the quaternion toolbox for {\tt MATLAB} \cite{sbqtfm} and the implicit Jacobi algorithm in \cite{bisa07}. We also apply the proposed algorithm to color image compression.  All the numerical tests were carried out in  {\tt MATLAB R2016a} running on a laptop of 2.00 GHz CPU and 4GB of RAM.

\begin{example}\label{ex:1}\upshape
In this example we compute the SVD of 19 $m\times n$ random quaternion matrices, where $m$ ranges from 10 to 100 with increment of 5 and $n=m/5$.
\end{example}

Figures \ref{fig:cputimes} and \ref{fig:errors} show, respectively, the CPU time and the residual $\|AV^{(\Theta)}-U^{(\Theta)}\Sigma^{(\Theta)}\|_F$ at the final iterate of the corresponding algorithms for different quaternion matrix sizes.

We can see from Figure \ref{fig:cputimes} that Algorithm  \ref{alg:2} is more efficient than the implicit Jacobi algorithm in \cite{bisa07} as the matrix size becomes larger. We also observe from Figure \ref{fig:errors} that  both two algorithms obtain almost the same calculation accuracy.
\begin{figure}[!htb]
	\begin{minipage}[t]{0.5\linewidth}
	\centering
	\includegraphics[width=8.7cm]{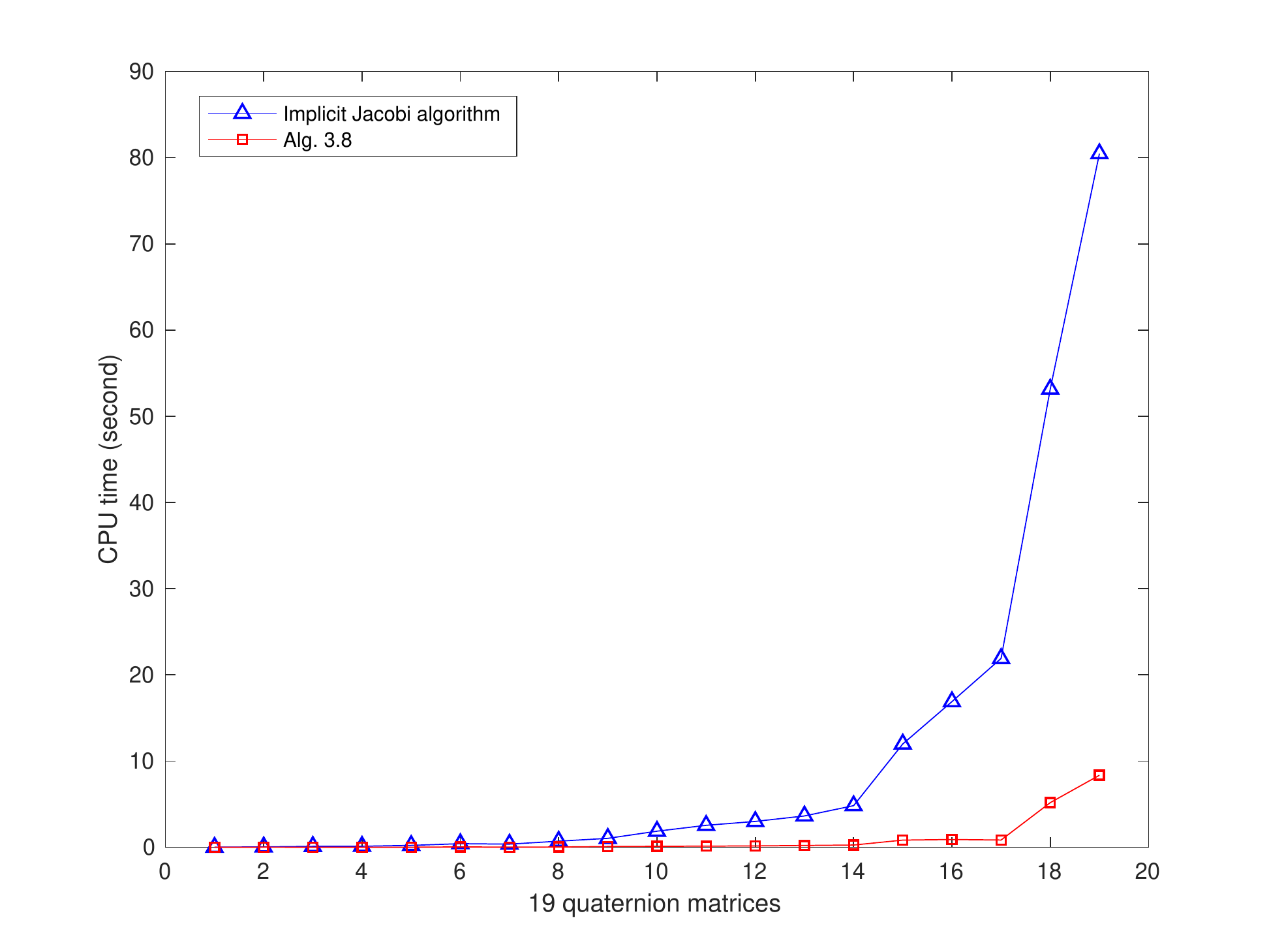}
	\caption{Numerical results for Ex. \ref{ex:1}.}\label{fig:cputimes}
	\end{minipage}
	\begin{minipage}[t]{0.5\linewidth}
	\centering
	\includegraphics[width=8.7cm]{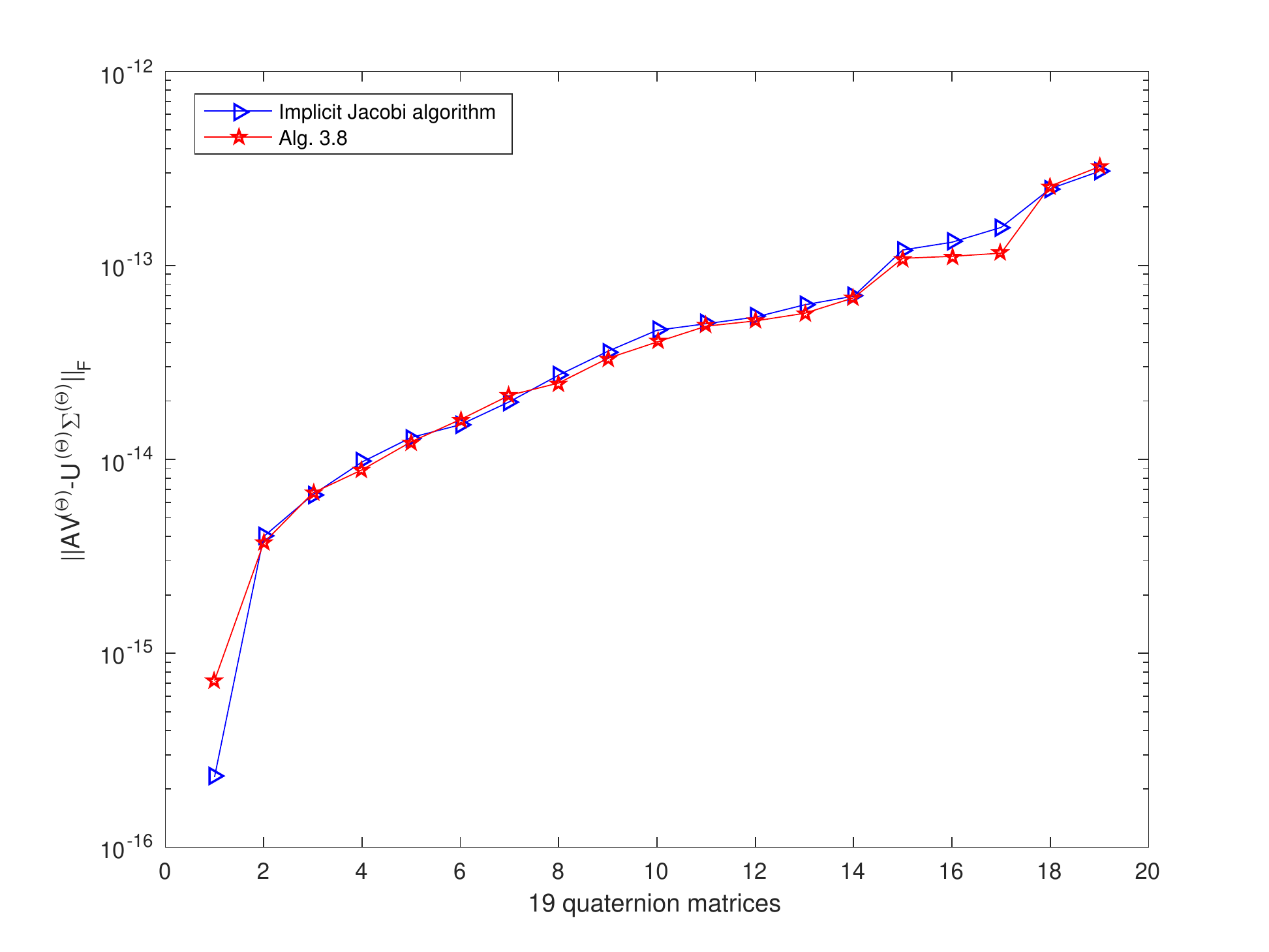}
	\caption{Numerical results for Ex. \ref{ex:1}.}\label{fig:errors}
\end{minipage}
\end{figure}

To further illustrate the effectiveness of Algorithm  \ref{alg:2}, Figure \ref{fig:singvals} depicts the computed singular values of a $100\times 20$ quaternion matrix by using Algorithm \ref{alg:2} and the {\tt svd} function in the quaternion toolbox for {\tt MATLAB} \cite{sbqtfm}. We see from Figure \ref{fig:singvals} that both algorithms obtain almost the same singular values.
\begin{figure}[!htb]
\centering
	\includegraphics[width=12cm]{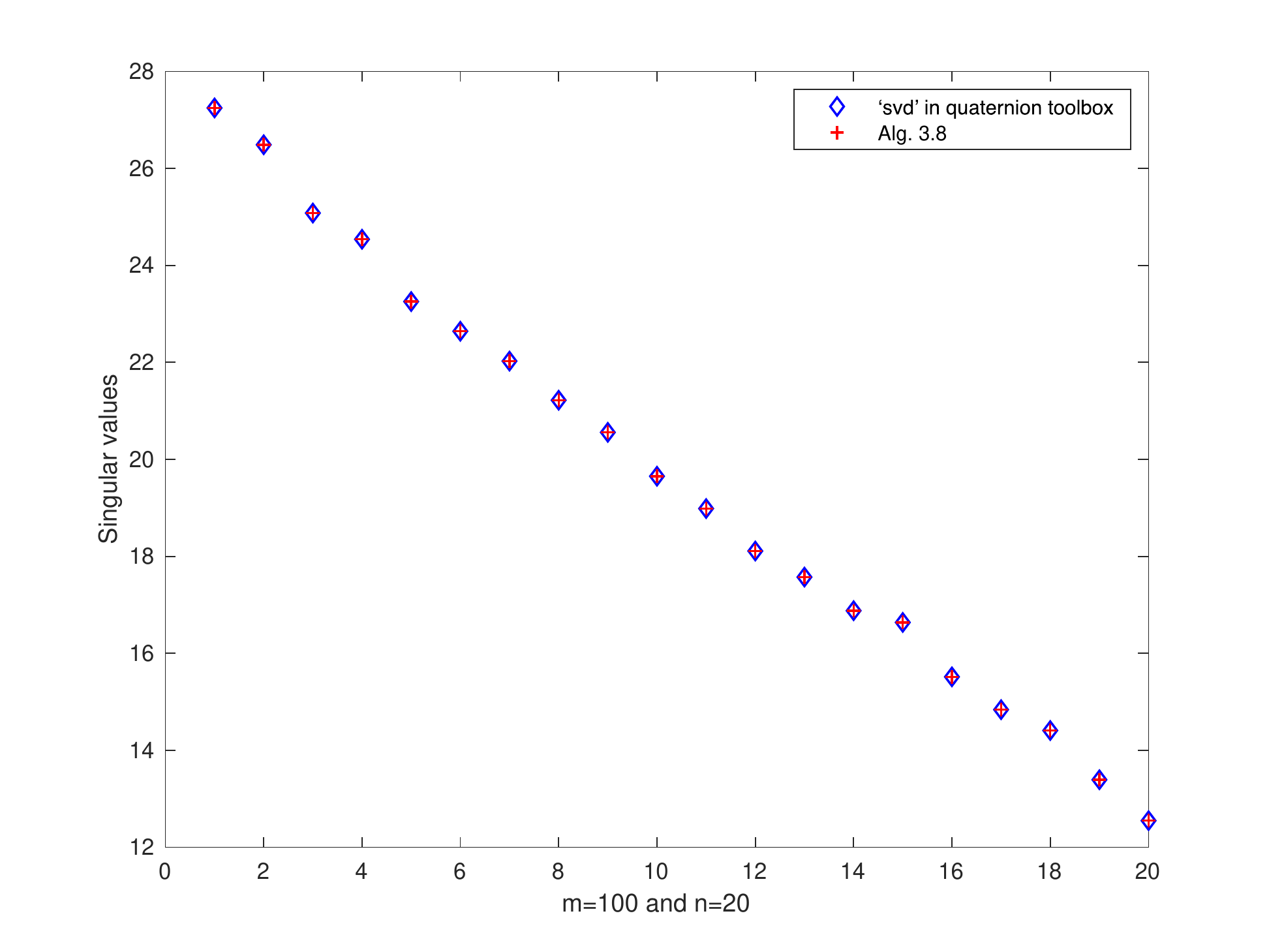}
	\caption{Computed singular values for one of the test quaternion matrices.}\label{fig:singvals}
\end{figure}

\begin{example}\label{ex:2}\upshape
In this example, we apply Algorithm \ref{alg:2} to image compression. We know a color image can be represented by a pure quaternion matrix $ A=(a_{ij})_{m\times n}=Ri+Gj+Bk $, where $R$, $G$, $B$ represent the red, green, blue parts of the color image. We use Algorithm  \ref{alg:2} to get the SVD of the color image without separating the color image into three channel images.
\end{example}

For demonstration purpose, in Example \ref{ex:2}, we take the color images  Snowberg, Rabbit and Eiffel Tower (Eiffel), whose sizes are  $50\times 50$, $50\times 50$ and $50\times 100$ accordingly. Figure \ref{fig:3singvals} shows the singular values of the original three color images. We can see that the singular values of these  images decay very fast. One may use Algorithm \ref{alg:2} to compute the SVD of a color image $A$ such that $AV^{(\Theta)}=U^{(\Theta)}\Sigma^{(\Theta)}$, where $U^{(\Theta)}$, $\Sigma^{(\Theta)}$, and $V^{(\Theta)}$ are given by (\ref{a:svd}). Then we can compress an image by  a lower-rank matrix approximation:
\begin{equation}
A_S= \sum_{w=1}^{S}\sigma_w^{(\Theta)}{\bu}_w^{(\Theta)}({\bv}_w^{(\Theta)})^*,
\end{equation}
where $\{\sigma_w^{(\Theta)}\}_{w=1}^S$ are the $S$ largest singular values of $A$, ${\bu}_w^{(\Theta)}$ and ${\bv}_w^{(\Theta)}$ are the left and right singular vectors of $A$ corresponding to $\sigma_w^{(\Theta)}$ for $w=1,\ldots,S$.

\begin{figure}[!htb]
\centering
	\includegraphics[width=12cm]{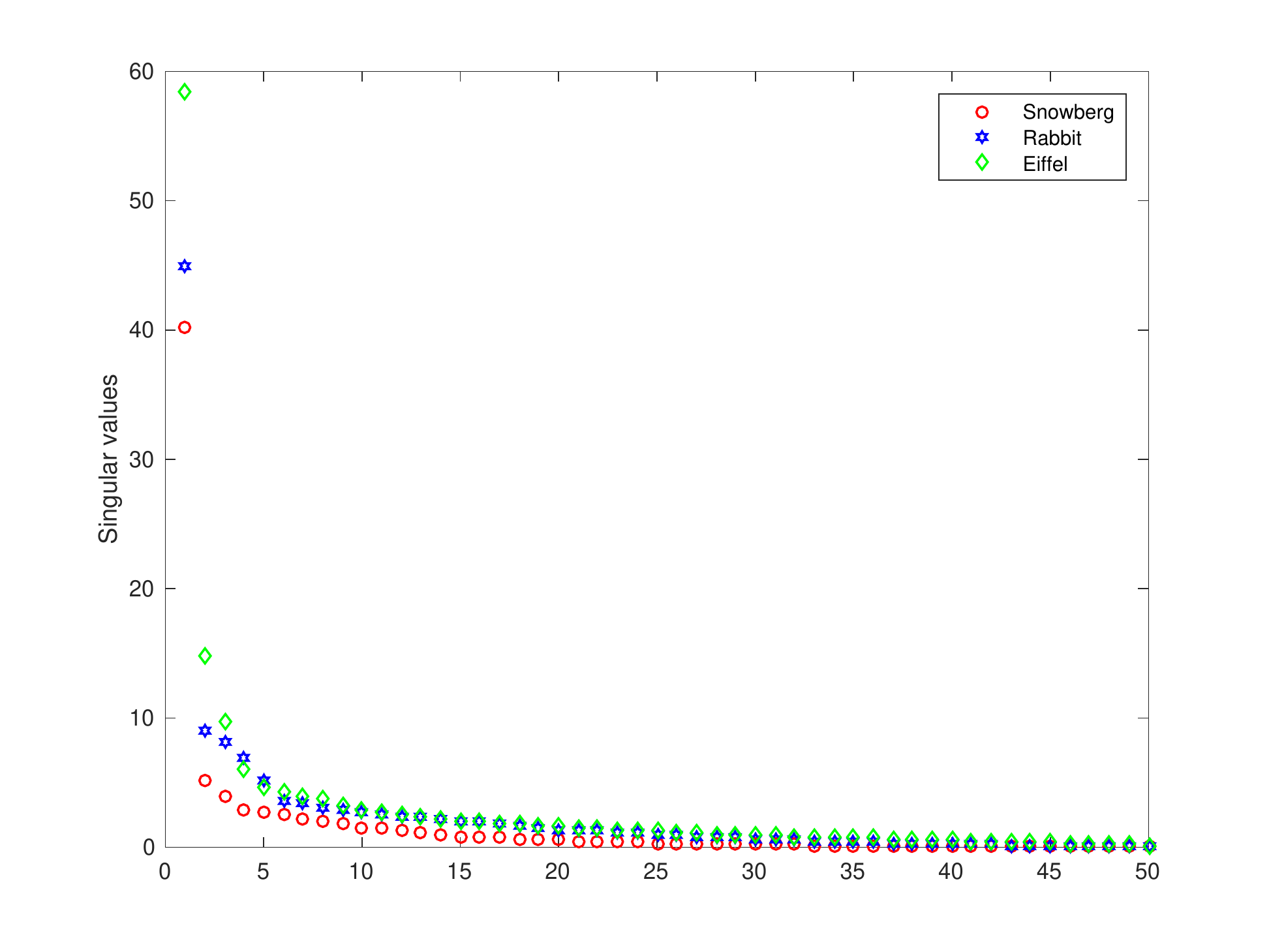}
	\caption{Singular values of three original color images.}\label{fig:3singvals}
\end{figure}

Figure \ref{fig:5figures} displays the original image and four estimated images with $S=10, 20, 30$ and $40$. We observe from  Figure \ref{fig:5figures}  that small $S$  already provides a good estimation of the original color image. Meanwhile, the storage requirements drop from $3mn$ to $S(4m+4n+1)$.  The peak signal-to-noise ratios (PSNRs) of the four estimated images are also listed in Table \ref{table:psnr}. The PSNR between the original image $f$ and a test image $g$, both of size $m\times n$, is defined by
\[
PSNR(f,g)=10 \log_{10}\left(\frac{255^2}{MSE(f,g)}\right),
\]
where MSE means the mean squared error defined by
\[
MSE(f,g)=\frac{1}{mn}\sum_{i=1}^{m}\sum_{j=1}^{n}(f_{ij}-g_{ij})^2.
\]

From Table \ref{table:psnr}, we  see that a lower-rank matrix approximation may provide a  good image compression, where the needed singular values and associated left and right singular vectors can be obtained by our algorithm.

\begin{figure}[!htb]
	\centering
	\subfigure[original]{
\begin{minipage}[b]{0.114\textwidth}
\includegraphics[width=\textwidth]{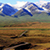}\\
\includegraphics[width=\textwidth]{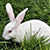}\\
\includegraphics[width=\textwidth]{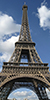}
\end{minipage}
}
	\subfigure[S=10]{
\begin{minipage}[b]{0.114\textwidth}
\includegraphics[width=\textwidth]{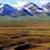}\\
\includegraphics[width=\textwidth]{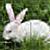}\\
	\includegraphics[width=\textwidth]{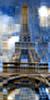}	
\end{minipage}
}
	\subfigure[S=20]{
\begin{minipage}[b]{0.114\textwidth}
\includegraphics[width=\textwidth]{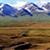}\\
\includegraphics[width=\textwidth]{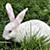}\\
	\includegraphics[width=\textwidth]{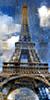}	
\end{minipage}
}
	\subfigure[S=30]{
\begin{minipage}[b]{0.114\textwidth}
\includegraphics[width=\textwidth]{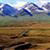}\\
\includegraphics[width=\textwidth]{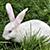}\\
	\includegraphics[width=\textwidth]{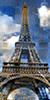}	
\end{minipage}
}
	\subfigure[S=40]{
\begin{minipage}[b]{0.114\textwidth}
\includegraphics[width=\textwidth]{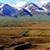}\\
\includegraphics[width=\textwidth]{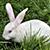}\\	
	\includegraphics[width=\textwidth]{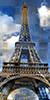}	
\end{minipage}
}
\caption{Original and estimated images.}\label{fig:5figures}
\end{figure}

\begin{table}[ht]\renewcommand{\arraystretch}{1.0} \addtolength{\tabcolsep}{2pt}
	\begin{center} {
			\begin{tabular}[c]{|c|c|c|c|c|}
				\hline
				S         & 10 & 20 & 30 & 40    \\  \hline
		Snowberg & 34.7060  &   35.9495  &  36.1030  &  36.1010    \\
				\cline{2-4}\hline		
		Rabbit	& 30.1050  &   33.8167  & 35.4383  & 35.6440   \\
				\cline{2-4}\hline
		Eiffel	 & 24.6439  &   25.4894  & 25.7858  & 25.8698 \\
				\cline{2-4}\hline
		\end{tabular} }
	\end{center}
	\caption{PSNR of the estimated color images.}\label{table:psnr}
\end{table}

\section{Conclusions}\label{sec5}
In this paper, we have proposed a real structure-preserving one-sided cyclic Jacobi algorithm for computing the QSVD. This  algorithm involves a sequence of column orthogonalizations in pairs via a sequence of orthogonal JRS-symplectic Jacobi rotations to the real counterpart of a quaternion matrix. The quadratic convergence is established under some assumptions. Finally, we report some numerical experiments to illustrate the efficiency of our algorithm. We also point out that the proposed algorithm can be used for  many practical applications such as the pseudoinverse of a quaternion matrix and color image processing (e.g., image compression, image enhancement, and image denoising).


\end{document}